\numberwithin{equation}{subsection}
\newtheorem{theorem}{Theorem}[section]
\newtheorem{corollary}[theorem]{Corollary}
\newtheorem{definition}[theorem]{Definition}
\newtheorem{example}[theorem]{Example}
\newtheorem{lemma}[theorem]{Lemma}
\newtheorem{proposition}[theorem]{Proposition}
\newtheorem{remark}[theorem]{Remark}
\begin{document}

\title[The Littlewood-Richardson rule]{The Littlewood-Richardson Rule \\
and Gelfand-Tsetlin Patterns}

\author {Patrick Doolan}
\address{School of Mathematics and Physics, The University of 
Queensland, St Lucia, QLD 4072, Australia} 
\email{patrick.doolan@uqconnect.edu.au}

\author {Sangjib Kim}
\address{Department of Mathematics, Ewha Womans University,
Seoul, 120-750, South Korea} 
\email{sk23@ewha.ac.kr}

\begin{abstract}
We give a survey on the Littlewood-Richardson rule.
Using Gelfand-Tsetlin patterns as the main machinery of our analysis, 
we study the interrelationship of various combinatorial descriptions
of the Littlewood-Richardson rule.
\end{abstract}

\subjclass[2010]{05E10, 20G05, 52B20} 
\keywords{Littlewood-Richardson rule, Gelfand-Tsetlin patterns}

\maketitle

\makeatletter
\def\Ddots{\mathinner{\mkern1mu\raise\p@
\vbox{\kern7\p@\hbox{.}}\mkern2mu
\raise4\p@\hbox{.}\mkern2mu\raise7\p@\hbox{.}\mkern1mu}}
\makeatother


\section{Introduction}


\subsection{}

Let us consider Schur polynomials $s_{\mu}$, $s_{\nu}$ and $s_{\lambda}$ in $n$ variables 
labelled by partitions $\mu, \nu$ and $\lambda$, respectively. 
\textit{The Littlewood-Richardson (LR) coefficient} is the multiplicity 
$c^{\lambda}_{\mu, \nu}$ of $s_{\nu}$ in the product of $s_{\mu}$ and $s_{\nu}$:
\begin{equation*}
s_{\mu} s_{\nu} = \sum_{\lambda} c^{\lambda}_{\mu, \nu} s_{\lambda}
\end{equation*}
and its description is called \textit{the LR rule}.

The same number appears in the tensor product decomposition problem in the representation 
theory of the complex general linear group $GL_n$ and Schubert calculus in the cohomology of the 
Grassmannians, and is also related to the eigenvalues of the sum of 
Hermitian matrices. For more details, we refer readers to \cite{Fu00, HL12, Ta04, vL01}.

\subsection{}

The LR rule is usually stated in terms of combinatorial objects 
called \textit{LR tableaux}. Recall that a Young tableau is a filling of the boxes
of a Young diagram with positive integers. We shall use the English convention 
of drawing Young diagrams and tableaux as in \cite{Fu97, St99} and assume a basic knowledge of these objects.

\begin{definition}
A tableau $T$ on a skew Young diagram is called a LR tableau if
it satisfies the following conditions:
\begin{enumerate}
\item it is semistandard, that is, the entries in each row of $T$
weakly increase from left to right, and the entries in each column
strictly increase from top to bottom; and

\item its reverse reading word is a Yamanouchi word (or lattice permutation). 
That is, in the word $x_1 x_2 x_3 \dots x_r$ obtained by reading all the entries 
of $T$ from left to right in each row starting from the bottom one,
the sequence $x_r x_{r-1} x_{r-2} \dots x_s$ contains at least as many $a$'s 
as it does $(a+1)$'s for all $a \geq 1$. 
\end{enumerate}
\end{definition}

For example, the following is a LR tableau on a skew Young 
diagram $(11,7,5,3)/(5,3,1)$
\begin{equation*}
\young(\ \ \ \ \ 111111,\ \ \ 1222,\ 2333,244)
\end{equation*}
and its reverse reading word is $24423331222111111$.

\begin{remark}\label{remark-Tx}
(1) In this paper we assume each tableau's entries  weakly 
increase from left to right in every row.
(2) From the second condition in the above definition, which we will call 
the \textit{Yamanouchi condition}, the $b$th row of a LR tableau does not 
contain any entries strictly bigger than $b$ for all $b\geq 1$.
\end{remark}

The number of LR tableaux on the skew shape $\lambda/\mu$ with 
content $\nu$ is equal to the LR number $c_{\mu,\nu}^{\lambda}$. Here,
the content $\nu=(\nu_1, \dotsc, \nu_n)$ of a tableau means that 
the entry $k$ appears $\nu_k$ times in the tableau for $k \geq 1$.
See, for example, \cite[\S I.9]{Ma95} and \cite{HL12}.

\subsection{}

In this paper, we survey variations of the semistandard and
Yamanouchi conditions with an emphasis on dualities in
combinatorial descriptions of the LR rule.
Although many of the results in this paper can be found in the literature, 
we will give complete and elementary proofs of our statements.

\smallskip

(1) In Theorem \ref{hive2gt2} and Theorem \ref{HT12}, we analyse \textit{hives}, 
introduced by Knutson and Tao along with their honeycomb model \cite{KT99}, 
in terms of Gelfand-Tsetlin(GT) patterns \cite{GT50}.
We then show how the interlacing conditions in GT patterns are intertwined to form the 
defining conditions of hives. For the relevant results, see for example \cite{BK, BZ1, BZ2, BZ3}.

(2) In Theorem \ref{LRGZ1}, we show that the semistandard and Yamanouchi
conditions in LR tableaux are equivalent to, respectively, the interlacing and 
exponent conditions in \textit{GZ schemes} introduced by Gelfand and Zelevinsky 
\cite{GZ85}. As a corollary we obtain a correspondence between LR tableaux and hives equivalent to \cite[(3.3)]{KTT1}. We then observe how conditions on LR tableaux, GZ schemes and hives are translated between objects by this bijection. For the relevant results, see, 
for example, \cite{BK, Bu00, KTT1, KTT2}.

(3) In Theorem \ref{CTGZ}, we show that the semistandard and Yamanouchi conditions 
in LR tableaux are equivalent to, respectively, the exponent and semistandard 
conditions in their \textit{companion tableaux} introduced by van Leeuwen \cite{vL01}. 
Here the correspondence between conditions is obtained by taking the transpose of matrices.

As a consequence, we obtain bijections between the families of combinatorial objects
counting the LR number.

\subsection{}

\smallskip

In \cite{HTW05, HJLTW09}, Howe and his collaborators constructed a polynomial model 
for the tensor product of representations in terms of two copies of 
the multi-homogeneous coordinate ring of the flag variety, and then studied its 
toric degeneration with the SAGBI-Gr\"{o}bner method. Through
the characterization of the leading monomials of highest weight vectors, their toric 
variety is encoded by the \textit{LR cone} \cite{PV05}.
On the other hand, via toric degenerations, the flag variety may be described 
in terms of the lattice cone of GT patterns \cite{GL96, Ki08, KM05}.
These results led us to study the LR rule in terms of two sets of
interlacing or semistandard conditions and to investigate the interrelationship 
of various combinatorial descriptions of the LR rule with GT patterns.

\smallskip


\section{Hives and GT Patterns I}


In this section, we define GT patterns, hives, and objects 
related to them. We also describe hives in terms of pairs of GT patterns.

\subsection{}

We set, once and for all, three polynomial dominant weights of the complex 
general linear group $GL_n$, that is, the sequences of nonnegative integers:
\begin{equation*}
\lambda = (\lambda_1, \dotsc, \lambda_n),\  \  
\mu = (\mu_1, \dotsc, \mu_n), \ \  \nu = (\nu_1, \dotsc, \nu_n)
\end{equation*}
such that $\lambda_i \geq \lambda_{i+1}$, $\mu_i \geq \mu_{i+1}$, 
and $\nu_i \geq \nu_{i+1}$ for all $i$.
We define the dual $\lambda^*$ of $\lambda$ to be 
\begin{equation*}
\lambda^*=(-\lambda_n, -\lambda_{n-1}, \dotsc, -\lambda_1),
\end{equation*} 
and define $\mu^*$ and $\nu^*$ similarly.

\subsection{}

Let us consider an array of integers, which we will call
a \textit{$t$-array}
$$ T = \left (t_1^{(1)}, \dots, t_j^{(i)}, \dots, 
t_n^{(n)} \right) \in \mathbb{Z}^{n(n+1)/2} $$
where $1 \leq j  \leq i \leq n$. 
We are particularly interested in the case when the entries of 
$T$ are either all non-negative or all non-positive integers.

\begin{definition}
A $t$-array $T=(t^{(i)}_j) \in \mathbb{Z}^{n(n+1)/2}$ is called a GT pattern 
for $GL_n$ if it satisfies the \textit{interlacing conditions}:
\begin{align*}
\hbox{IC(1):} &\  t_{j}^{(i+1)} \geq t_{j}^{(i)} \\
\hbox{IC(2):} &\  t_{j}^{(i)} \geq t_{j+1}^{(i+1)}
\end{align*}
for all $i$ and $j$.
\end{definition}

We shall draw a $t$-array in the reversed pyramid form. For example,
a generic GT pattern for $GL_5$ is%
\begin{equation*}
\begin{array}{ccccccccc}
t_{1}^{(5)} &  & t_{2}^{(5)} &  & t_{3}^{(5)} &  & t_{4}^{(5)} &  &
t_{5}^{(5)} \\
& t_{1}^{(4)} &  & t_{2}^{(4)} &  & t_{3}^{(4)} &  & t_{4}^{(4)} &  \\
&  & t_{1}^{(3)} &  & t_{2}^{(3)} &  & t_{3}^{(3)} &  &  \\
&  &  & t_{1}^{(2)} &  & t_{2}^{(2)} &  &  &  \\
&  &  &  & t_{1}^{(1)} &  &  &  &
\end{array}%
\end{equation*}
where the entries are weakly decreasing along the diagonals 
from left to right.

Then, the \textit{dual array} $T^{\ast}=(s_j^{(i)})$ of $T$ is the $t$-array 
obtained by reflecting $T$ over a vertical line and then multiplying $-1$, i.e.,
$$s_j^{(i)} = -t_{i+1-j}^{(i)}$$
for all $1 \leq j  \leq i \leq n$.

\begin{definition}\label{def-weights}
For a $t$-array $T=(t_{j}^{(i)}) \in \mathbb{Z}^{n(n+1)/2}$, 
\begin{enumerate}
\item the $k$th row of $T$ is 
$t^{(k)}=(t_{1}^{(k)},t_{2}^{(k)},\dotsc ,t_{k}^{(k)}) \in \mathbb{Z}^k$
for $1\leq k\leq n$. The type of $T$ is its $n$th row;

\item the weight of $T$ is $(w_{1},w_{2},\dotsc ,w_{n}) \in \mathbb{Z}^n$ where
$w_1 = t_{1}^{(1)}$ and
\begin{equation*}
w_i = \sum_{k=1}^{i} t_{k}^{(i)} - \sum_{k=1}^{i-1} t_{k}^{(i-1)} 
\hbox{\ \ \ for\ \ } 2\leq i\leq n.
\end{equation*}
\end{enumerate}
\end{definition}
Note that if $T$ is of type $\lambda$ and weight $w \in \mathbb{Z}^n$, then
$T^*$ is of type $\lambda^*$ and weight $-w$.

\smallskip

GT patterns were introduced by Gelfand and Tsetlin in \cite{GT50} to label
the weight basis elements of an irreducible representation of the general linear group. 
The weight of $T$ is exactly the weight of the basis element labelled by $T$ in the
irreducible representation $V_n^{\mu}$ whose highest weight is $\mu=t^{(n)}$. It follows 
that the dual array $T^*$ of $T$ corresponds to a weight vector in the contragradiant
representation of $V_n^{\mu}$.

\subsection{}

Let us consider an array of nonnegative integers, which we will 
call a \textit{$h$-array},
\begin{equation*}
\left(h_{0,0}, \dots, h_{a,b}, \dots, h_{n,n} \right) 
\in \mathbb{Z}^{(n+1)(n+2)/2}
\end{equation*}%
where $0\leq a\leq b\leq n$ and $h_{0,0}=0$. 

\begin{definition}
A hive for $GL_n$ is a $h$-array $H=(h_{a,b}) \in \mathbb{Z}^{(n+1)(n+2)/2}$
satisfying the \textit{rhombus conditions}:
\begin{align*}
\hbox{RC(1):} &\  (h_{a,b}+h_{a-1,b-1}) \geq (h_{a-1,b}+h_{a,b-1}) 
\hbox{\ \ for\ \ } 1 \leq a < b \leq n, \\
\hbox{RC(2):} &\  (h_{a-1,b}+h_{a,b}) \geq (h_{a,b+1}+h_{a-1,b-1}) 
\hbox{\ \ for\ \ } 1 \leq a \leq b < n,  \\
\hbox{RC(3):} &\  (h_{a,b}+h_{a,b+1}) \geq (h_{a+1,b+1}+h_{a-1,b}) 
\hbox{\ \ for\ \ } 1 \leq a \leq b <n.
\end{align*}%
\end{definition}

We shall draw a $h$-array in the pyramid form. For example, 
a generic hive for $GL_3$ is shown below.
\begin{equation*}
\begin{array}{ccccccccccccc}
&  &  &  &  &  & h_{0,0} &  &  &  &  &  &  \\
&  &  &  &  &  &  &  &  &  &  &  &  \\
&  &  &  & h_{0,1} &  &  &  & h_{1,1} &  &  &  &  \\
&  &  &  &  &  &  &  &  &  &  &  &  \\
&  & h_{0,2} &  &  &  & h_{1,2} &  &  &  & h_{2,2} &  &  \\
&  &  &  &  &  &  &  &  &  &  &  &  \\
h_{0,3} &  &  &  & h_{1,3} &  &  &  & h_{2,3} &  &  &  & h_{3,3}%
\end{array}%
\end{equation*}
The rhombus conditions RC(1), RC(2), and RC(3) then say that, 
for each fundamental rhombus of one of the following forms,
\begin{equation*}
\begin{array}{ccccccccccccccccccccc}
  &   &  &  &   &  & &  &   & & A&  &   & & & &  &  &  &  & \\
  &   &  &  &   &  & &  &   & & &  &   & & & &  &  &  &  & \\
  & O' &  &  & A' &  & &  &O & & &  & O' & & & &  A'&  &  &O & \\
  &   &  &  &   &  & &  &   & & &  &   & & & &  &  &  &  & \\
 A&   &  & O&   &, & &  &   & & A'&  &   &,& & &  & O'&  &  & A%
\end{array}%
\end{equation*}
the sum of entries at the obtuse corners is bigger than or equal 
to the sum of entries at the acute corners, i.e., $O + O' \geq A + A'$.

For polynomial dominant weights $\mu$, $\nu$, and $\lambda$ of $GL_n$, 
we let $\mathcal{H}(\mu,\nu,\lambda)$ denote the set of all $h$-arrays such that
\begin{align} \label{hivebdy}
\mu &= (h_{0,1}-h_{0,0}, h_{0,2}- h_{0,1},\dotsc, h_{0,n} - h_{0,n-1}), \notag \\
\nu &= (h_{1,n}-h_{0,n}, h_{2,n}- h_{1,n},\dotsc, h_{n,n} - h_{n-1,n}), \\
\lambda &= (h_{1,1}-h_{0,0}, h_{2,2}- h_{1,1},\dotsc, h_{n,n} - h_{n-1,n-1}). \notag
\end{align}
That is, the three boundary sides of $H \in \mathcal{H}(\mu,\nu,\lambda)$ are fixed:
\begin{align*}
h_{0,i} &= \mu_1 + \mu_2 + \dotsb + \mu_i \\ 
h_{i,n} &= \sum_{j=1}^n \mu_j + \nu_1 + \nu_2 + \dotsb + \nu_i \\
h_{i,i} &= \lambda_1 + \lambda_2 + \dotsb + \lambda_i
\end{align*}
for $1 \leq i \leq n$. Recall that we always set $h_{0,0}=0$. Let $\mathcal{H}^{\circ}(\mu,\nu,\lambda)$ be the subset of $\mathcal{H}(\mu,\nu,\lambda)$ satisfying the rhombus conditions. This is the set of hives whose boundaries are described by \eqref{hivebdy}.

Hives were introduced by Knutson and Tao in \cite{KT99} along with their honeycomb model 
to prove the saturation conjecture. In particular, the number of hives in 
$\mathcal{H}^{\circ}(\mu,\nu,\lambda)$ is equal to the LR number $c_{\mu,\nu}^{\lambda}$. 
See also \cite{Bu00, KTW04, PV05}.

\subsection{}

For each $h$-array $H=(h_{a,b})\in \mathbb{Z}^{(n+1)(n+2)/2}$, let us define 
its \textit{derived $t$-arrays} 
\begin{equation*}
T_1 = (x_j^{(i)}),\ \ \  T_2 = (y_j^{(i)}), \ \ \  T_3 = (z_j^{(i)})
\end{equation*}
whose entries are obtained from the differences of adjacent entries of $H$. 
\begin{figure}[ht!]
\begin{center}
\includegraphics[scale=0.5]{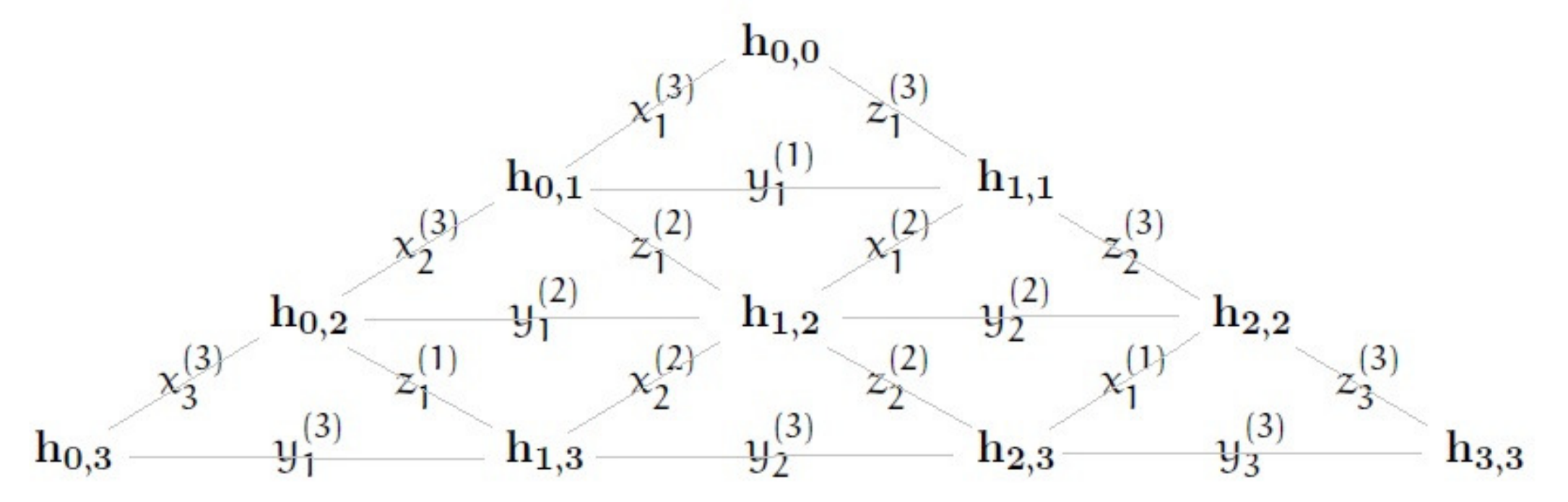}
\caption{A $h$-array and its three derived $t$-arrays.}
\end{center}
\end{figure}

More specifically, for each fundamental triangle in $H$,
\begin{equation*}
\begin{array}{ccccc}
&  & h_{a,b} &  &  \\
&  &  &  &  \\
h_{a,b+1} &  &  &  & h_{a+1,b+1}%
\end{array}
\end{equation*}
the entries of the derived $t$-arrays $(x_j^{(i)})$, $(y_j^{(i)})$, 
and $(z_j^{(i)})$ are 
\begin{align}\label{derived-arrays12}
x_{b+1-a}^{(n-a)} &= h_{a,b+1}-h_{a,b} \hbox{\ \ \ \ (SW--NE direction)}\notag \\
y_{a+1}^{(b+1)} &= h_{a+1,b+1}-h_{a,b+1} \hbox{\ \ (E--W direction)}  \\ 
z_{a+1}^{(n+a-b)} &= h_{a+1,b+1}-h_{a,b} \hbox{\ \ \ \ (SE--NW direction)} \notag
\end{align}%
for $0\leq a\leq b\leq n-1$. 

This rather involved indexing is to make
the entries of the derived arrays compatible with those of GT patterns.
We may visualize the derived $t$-arrays by placing their entries
between the entries of the $h$-array used to compute them.
For example, if $n=3$, then a $h$-array and its three derived $t$-arrays 
may be drawn as Figure 1.

\subsection{}

The rhombus conditions for $h$-arrays are closely related to 
the interlacing conditions for their derived $t$-arrays.
\begin{proposition}\label{IC-Rhom}
Let $T_k=T_k(H)$ be a derived $t$-array of a $h$-array $H$ for $k=1,2,3$.
\begin{enumerate}
\item $H$ satisfies {RC(1)} if and only if $T_1$ satisfies IC(2) and $T_2$ satisfies {IC(1)}.
\item $H$ satisfies {RC(2)} if and only if $T_1$ and $T_3$ satisfy {IC(1)}.
\item $H$ satisfies {RC(3)} if and only if $T_2$ and $T_3$ satisfy {IC(2)}.
\item $T_3$ satisfies {IC(1)} if and only if $T_1$ satisfies {IC(1)}.
\item $T_3$ satisfies {IC(2)} if and only if $T_2$ satisfies {IC(2)}.
\end{enumerate}
\begin{proof}
Let us consider five adjacent entries of $H$ of the forms
\begin{small}
\begin{equation*}
\begin{array}{cccccccccccccccccccccc}
  &   &  & Z_1&   &  &   & &  &  &  &  &  &  &    Z_3&  &   &  \\
  &   &  &  &   &  & &   &  &  &  &  &  &  & &      &   &   \\
  & Y_1 &  &  & W_1 &   &  & & Y_2&  &  &W_2 &    &  Y_3&  &  & W_3 &  \\
  &   &  &  &   &  & &   &  &  &  &  &  &  & &    &     &   \\
 X_1&   &  & V_1&   &, &  X_2& &  &V_2 &  &  &U_2,    & & V_3&  &   &U_3.%
\end{array}%
\end{equation*}
\end{small}
Then, in the first and the third ones, RC(2) says that $Y_i+W_i \geq Z_i +V_i$ 
for $i=1$ and $3$. This is equivalent to $Y_1 - Z_1 \geq V_1 - W_1$ and 
$W_3 - Z_3 \geq V_3 - Y_3$, which are IC(1) for $T_1$ and $T_3$, respectively. 
This proves the statement (2). The statements (1) and (3) can be shown similarly.

Next, let us consider fundamental rhombi of the following forms in $H$
\begin{small}
\begin{equation*}
\begin{array}{ccccccccccccccc}
   &  & K&   &   & & & & &  & &  &  &  &    \\
   &  &  &   &   & & & & &  & &  &  &  &   \\
 L &  &  &   & N & & & & &P & &  & S&  &    \\
   &  &  &   &   & & & & &  & &  &  &  &    \\
   &  & M&   &   &,& & & &  & & Q&  &  & R .%
\end{array}
\end{equation*}
\end{small}
Note that $N-K \geq M-L$ if and only if $L-K \geq M-N$, which proves (4).
Similarly, $P-Q \geq S-R$ if and only if $P-S \geq Q-R$, which proves (5).
\end{proof}
\end{proposition}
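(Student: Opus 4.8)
The plan is to reduce all five equivalences to the single observation that, after substituting the definitions in \eqref{derived-arrays12}, each interlacing condition and each rhombus condition becomes one linear inequality among three or four adjacent entries of $H$, and that the inequalities on the two sides of each claimed equivalence coincide after rearrangement. The first step is therefore to set up a dictionary: by \eqref{derived-arrays12}, every entry of $T_1$ is the difference of the two $h$-entries along an SW--NE edge, every entry of $T_2$ the difference along an E--W edge, and every entry of $T_3$ the difference along an SE--NW edge. Under this dictionary, IC(1) and IC(2) for a fixed derived array each compare two parallel edges of the same type situated in adjacent rows of $H$, so each interlacing instance unfolds into an inequality of the shape (one difference of $h$-entries) $\geq$ (another difference).

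For the statements (1)--(3) I would fix a single rhombus condition $\mathrm{RC}(k)$ at a given position $(a,b)$ and write it in the symmetric form $O+O' \geq A+A'$, with $O,O'$ the obtuse corners and $A,A'$ the acute corners. The four entries can be grouped into two differences in two distinct ways; one grouping reproduces exactly an IC instance for the first derived array named in the statement, and the other grouping reproduces an IC instance for the second. Hence the one rhombus inequality is simultaneously an instance of both interlacing conditions, and letting $(a,b)$ run over its range yields the two-sided equivalence in each case. For the statements (4) and (5) I would instead fix a fundamental rhombus (a four-entry diamond $K,L,N,M$ with $K$ on top and $M$ at the bottom) and invoke the elementary identity that $N-K \geq M-L$ and $L-K \geq M-N$ are both equivalent to $L+N \geq M+K$; reading the two diagonals of the diamond as edges of types $1$ and $3$ identifies each IC(1) instance of $T_3$ with an IC(1) instance of $T_1$, and reading them as edges of types $2$ and $3$ does the same for IC(2) of $T_3$ and IC(2) of $T_2$.

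I expect the only genuine difficulty to be the index bookkeeping. Because the derived arrays are deliberately indexed to match GT patterns rather than the hive coordinates $(a,b)$, the work is to confirm that each position $(a,b)$ maps to the correct GT indices $(i,j)$ and, above all, that the admissible ranges $1 \leq a < b \leq n$ and $1 \leq a \leq b < n$ correspond bijectively to the full set of valid interlacing instances for the relevant derived arrays, so that no condition is dropped and none is counted twice. Once this dictionary is pinned down, each of the five equivalences collapses to a one-line rearrangement, and the geometric description in terms of obtuse and acute corners of the fundamental rhombi makes these rearrangements transparent without any explicit manipulation of the subscripts.
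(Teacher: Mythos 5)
Your reduction of (1)--(3) to the two regroupings of each rhombus inequality $O+O'\geq A+A'$ is exactly the paper's argument, and your treatment of (4) is also the paper's: the vertical diamond with vertices $K=h_{a,b}$, $L=h_{a,b+1}$, $N=h_{a+1,b+1}$, $M=h_{a+1,b+2}$ has two parallel SW--NE sides ($L-K$ and $M-N$, entries of $T_1$) and two parallel SE--NW sides ($N-K$ and $M-L$, entries of $T_3$), and both IC(1) instances are equivalent to the single inequality $L+N\geq K+M$.

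The gap is in (5). You claim the same diamond can be ``read'' as having edges of types 2 and 3, but it cannot: none of its four sides is an E--W edge, so no entry of $T_2$ occurs as a difference of two of its vertices (its horizontal diagonal $N-L=y_{a+1}^{(b+1)}$ is a single $T_2$ entry, and one entry does not constitute an interlacing comparison). Worse, by \eqref{derived-arrays12} its two SE--NW sides are $z_{a+1}^{(n+a-b)}$ and $z_{a+1}^{(n+a-b-1)}$, which share the lower index $a+1$, so comparing them is an IC(1) instance for $T_3$, never an IC(2) instance; statement (5) is simply not visible on that diamond. What (5) requires is the third rhombus orientation --- the one underlying RC(3) --- with vertices $P=h_{a,b}$, $S=h_{a+1,b}$ on one row and $Q=h_{a+1,b+1}$, $R=h_{a+2,b+1}$ on the next. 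Its sides are two E--W edges, $S-P=y_{a+1}^{(b)}$ and $R-Q=y_{a+2}^{(b+1)}$, and two SE--NW edges, $Q-P=z_{a+1}^{(n+a-b)}$ and $R-S=z_{a+2}^{(n+a-b+1)}$; both comparisons are genuine IC(2) instances (subscript and superscript each shift by one), and both are equivalent to $S+Q\geq P+R$. This is precisely the rhombus $P,S,Q,R$ in the paper's proof. The moral is that the index bookkeeping you deferred is not a formality: each of the three rhombus orientations can express exactly one pair of interlacing conditions (types 1--2, 1--3, 2--3, matching RC(1), RC(2), RC(3) respectively), so the assignment of shapes to the statements (1)--(5) is forced, and no single shape can be reinterpreted to cover two different pairs.
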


Suppose a $h$-array $H$ satisfies RC(1), RC(2), and RC(3). Then, by the statements (1) and (2) of Proposition \ref{IC-Rhom},
$T_1(H)$ satisfies IC(1) and IC(2). Similarly, by the statements (1) and (3), $T_2(H)$ satisfies IC(1) and IC(2). This shows
that $T_1(H)$ and $T_2(H)$ are GT patterns. Conversely, if $T_1(H)$ and $T_2(H)$ are GT patterns, then, 
by the statements (4) and (5), $T_3(H)$ is also a GT pattern. This means all three derived $t$-arrays satisfy both 
IC(1) and IC(2), and therefore, from the statements (1), (2), and (3), $H$ is a hive.

\begin{theorem}\label{hive2gt2}
For a $h$-array $H\in \mathbb{Z}^{(n+1)(n+2)/2}$ and its derived 
$t$-arrays $T_1(H)$ and $T_2(H)$, $H$ is a hive if and only if $T_1(H)$ 
and $T_2(H)$ are GT patterns for $GL_n$.
\end{theorem}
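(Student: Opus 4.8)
The plan is to assemble Theorem \ref{hive2gt2} directly from Proposition \ref{IC-Rhom}, whose five equivalences already do essentially all of the combinatorial work. The theorem is a clean logical repackaging: it asserts the equivalence of the three rhombus conditions on $H$ with the statement that the two derived arrays $T_1(H)$ and $T_2(H)$ are GT patterns (i.e.\ each satisfies both IC(1) and IC(2)). So the strategy is to prove both implications of the biconditional by tracking exactly which interlacing conditions on $T_1$, $T_2$, $T_3$ are forced, and to use statements (4) and (5) to eliminate any dependence on $T_3$.

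First I would prove the forward direction. Assume $H$ is a hive, so RC(1), RC(2), RC(3) all hold. From statement (2), RC(2) gives IC(1) for $T_1$; from statement (1), RC(1) gives IC(2) for $T_1$. Hence $T_1$ satisfies both interlacing conditions and is a GT pattern. Dually, statement (1) shows RC(1) gives IC(1) for $T_2$, and statement (3) shows RC(3) gives IC(2) for $T_2$, so $T_2$ is also a GT pattern. This direction is a direct read-off and needs no auxiliary facts about $T_3$.

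For the converse I would assume $T_1$ and $T_2$ are both GT patterns, so all four of the conditions ``$T_1$ satisfies IC(1),'' ``$T_1$ satisfies IC(2),'' ``$T_2$ satisfies IC(1),'' ``$T_2$ satisfies IC(2)'' hold. The key move is to recover the relevant conditions on $T_3$: statement (4) converts IC(1) for $T_1$ into IC(1) for $T_3$, and statement (5) converts IC(2) for $T_2$ into IC(2) for $T_3$, so $T_3$ is a GT pattern too. Now every derived array satisfies both interlacing conditions, and I read off the rhombus conditions in the reverse direction: RC(1) from statement (1) (using IC(2) for $T_1$ and IC(1) for $T_2$), RC(2) from statement (2) (using IC(1) for $T_1$ and $T_3$), and RC(3) from statement (3) (using IC(2) for $T_2$ and $T_3$). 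Thus $H$ is a hive.

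Since the entire argument is already carried out in the paragraph immediately preceding the theorem statement, there is essentially no obstacle: every nontrivial equivalence is delegated to Proposition \ref{IC-Rhom}. The only point requiring care is bookkeeping---making sure that in the converse direction one genuinely needs the $T_3$ conditions (and hence statements (4) and (5)), because RC(2) and RC(3) as stated in Proposition \ref{IC-Rhom} are phrased in terms of $T_3$ rather than $T_1$ and $T_2$ alone. Verifying that the indices match up correctly across the three derived arrays (recall the deliberately shifted indexing in \eqref{derived-arrays12}) is the one place where a careless reading could go wrong, but this is already subsumed in the proof of the proposition.
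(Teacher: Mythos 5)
Your proof is correct and is essentially identical to the paper's own argument (given in the paragraph immediately preceding the theorem): both directions cite the same parts of Proposition \ref{IC-Rhom}, with statements (1)--(3) read off for the forward implication and statements (4), (5) used to recover the conditions on $T_3$ in the converse. No gaps.
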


We remark that, in the above result, $T_1(H)$ and $T_2(H)$ are not independent.
Let $T_1=(x^{(i)}_j)$ and $T_2=(y^{(i)}_j)$ be the derived 
$t$-arrays of a $h$-array $H$. Then, for each rhombus of the form
\begin{equation*}
\begin{array}{cccccc}
 & & B & & & A \\ 
 & &   & & &   \\
 C& &  &D & &
\end{array}
\end{equation*} 
we have $(D-C)+(C-B)=(D-A)+(A-B)$, or 
\begin{equation*}
(C-B)-(D-A)=(A-B)-(D-C)
\end{equation*}
which is,
using \eqref{derived-arrays12},
\begin{equation}\label{tete}
x_{b-a}^{(n-a-1)} - x_{b+1-a}^{(n-a)} = y_{a+1}^{(b+1)} - y_{a+1}^{(b)}
\end{equation}
for $0 \leq a < b < n$.
Note that hives (respectively, GT patterns) for $GL_n$ with non-negative entries 
form a subsemigroup of $\mathbb{Z}^{(n+1)(n+2)/2}_{\geq 0}$ 
(respectively, $\mathbb{Z}^{n(n+1)/2}_{\geq 0}$). 
Theorem \ref{hive2gt2} and \eqref{tete} imply that the semigroup 
\begin{equation*}
\bigcup_{(\mu,\nu,\lambda)}\mathcal{H}^{\circ}(\mu,\nu,\lambda)
\end{equation*} 
of hives is a fiber product of, 
over $\mathbb{Z}_{\geq 0}^{n(n-1)/2}$, two affine semigroups $S_{GT}^1$ 
and $S_{GT}^2$ of GT patterns with respect to
\begin{equation*}
\phi_k : S_{GT}^k \longrightarrow \mathbb{Z}_{\geq 0}^{n(n-1)/2}
\end{equation*}
such that, for $0 \leq a < b <n$, 
\begin{align*}
\phi_1(T_1) &= \left( \dots , x_{b-a}^{(n-a-1)}  - x_{b+1-a}^{(n-a)}, \dots \right), \\ 
\phi_2(T_2) &= \left( \dots , y_{a+1}^{(b+1)} - y_{a+1}^{(b)}, \dots \right)
\end{align*}
where $T_1 = (x_j^{(i)}) \in S_{GT}^1$ and $T_2 = (y_j^{(i)}) \in S_{GT}^2$.

We also remark that by exchanging the roles of $T_1(H)$, $T_2(H)$ and $T_3(H)$,
one can read the symmetry of the LR rule. See, for example, \cite{TY}.


\section{Hives and GT Patterns II}


In this section, we study the set $\mathcal{H}^{\circ}(\mu, \nu, \lambda)$ of hives 
with a given boundary condition in terms of a single GT pattern.

\subsection{}

Gelfand and Zelevinsky counted the LR number $c_{\mu,\nu}^{\lambda}$ with 
GT patterns of type $\mu$ and weight $\lambda-\nu$ satisfying the following additional condition.

\begin{lemma}\cite{GZ85}
For a $t$-array $T=(t_{j}^{(i)}) \in \mathbb{Z}^{n(n+1)/2}$, we define its exponents as
\begin{equation*}
\varepsilon _{j}^{(i)}(T)=\sum_{1\leq
h<j}(t_{h}^{(i+1)}-2t_{h}^{(i)}+t_{h}^{(i-1)})+(t_{j}^{(i+1)}-t_{j}^{(i)}).
\end{equation*}%
Then the cardinality of the set $GZ(\mu,\lambda-\nu,\nu)$ of all GT
patterns $T$ of type $\mu$ with weight $\lambda-\nu$ such that, 
for all $i$ and $j$, 
\begin{equation*}
\varepsilon _{j}^{(i)}(T)\leq \nu_{i} - \nu_{i+1}
\end{equation*}%
is equal to the LR number $c_{\mu,\nu}^{\lambda}$.
\end{lemma}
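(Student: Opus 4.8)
The plan is to realise $GZ(\mu,\lambda-\nu,\nu)$ as the image, under the derived-array construction, of the hive set $\mathcal{H}^{\circ}(\mu,\nu,\lambda)$, whose cardinality is already known to equal $c_{\mu,\nu}^{\lambda}$. Concretely, I would build a bijection $H \mapsto T_1(H)$ between $\mathcal{H}^{\circ}(\mu,\nu,\lambda)$ and $GZ(\mu,\lambda-\nu,\nu)$ and then invoke the count $|\mathcal{H}^{\circ}(\mu,\nu,\lambda)| = c_{\mu,\nu}^{\lambda}$.

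First I would pin down the type and weight of the derived array. Starting from a generic $h$-array $H$ with boundary \eqref{hivebdy} and telescoping the SW--NE differences \eqref{derived-arrays12} along each line of constant first index, the $i$th row-sum of $T_1(H)$ collapses to $h_{n-i,n}-h_{n-i,n-i}$, which the boundary formulas rewrite in terms of $\mu,\nu,\lambda$. A short computation then shows that $T_1(H)$ has type $\mu$ and weight $\lambda-\nu$ (read in the order dictated by the indexing convention, using $|\lambda|=|\mu|+|\nu|$ for the top entry). The same telescoping shows that $H\mapsto T_1(H)$ is a bijection from the set of all $h$-arrays with boundary \eqref{hivebdy} onto the set of GT-shaped arrays of type $\mu$ and weight $\lambda-\nu$: one recovers $h_{a,b}$ by summing the entries of $T_1$ along a constant-first-index line anchored at a fixed boundary edge, and the remaining two boundary edges are then forced by the type and the weight.

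Next I would cut down to hives. By Theorem \ref{hive2gt2}, $H$ is a hive precisely when both $T_1(H)$ and $T_2(H)$ are GT patterns, so on the GZ side I must recognise the extra conditions in terms of $T=T_1(H)$ alone. Since a member of $GZ(\mu,\lambda-\nu,\nu)$ is by hypothesis a GT pattern, $T_1(H)$ already satisfies IC(1) and IC(2); relation \eqref{tete} equates each first difference of $T_2$ in the superscript direction with the quantity $x_{b-a}^{(n-a-1)}-x_{b+1-a}^{(n-a)}$, which is nonnegative exactly by IC(2) for $T_1$, so IC(1) for $T_2$ comes for free. Thus the only residual requirement is IC(2) for $T_2$, that is $y_{j}^{(i)}\geq y_{j+1}^{(i+1)}$ for all $i,j$.

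The heart of the argument, and the step I expect to be the main obstacle, is translating IC(2) for $T_2$ into the exponent inequalities. Here I would expand a $T_2$-entry $y_{j}^{(i)}=h_{j,i}-h_{j-1,i}$ by telescoping the SW--NE differences of $H$, anchoring the reconstruction at the $\nu$-boundary $h_{a,n}$ rather than at the $\lambda$-diagonal, so that the constant term produced is $\nu_i-\nu_{i+1}$. Forming $y_{j}^{(i)}-y_{j+1}^{(i+1)}$ then collects the entries of $T$ into a single signed sum of second differences $x_{h}^{(i+1)}-2x_{h}^{(i)}+x_{h}^{(i-1)}$ together with one first difference, which is precisely the combination defining $\varepsilon_{j}^{(i)}(T)$; consequently IC(2) for $T_2$ becomes $\varepsilon_{j}^{(i)}(T)\leq \nu_i-\nu_{i+1}$. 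The delicate point is bookkeeping the indices: the paper's derived-array indexing and the GZ indexing differ by the reflection of the hive that fixes the $\mu$-edge and interchanges the $\lambda$- and $\nu$-edges, and one must check that this reflection carries the row-sum weight and the bound to exactly $\lambda-\nu$ and $\nu_i-\nu_{i+1}$, rather than to their $\lambda\leftrightarrow\nu$ counterparts. Once the correspondence of conditions is verified, combining the three steps yields $|GZ(\mu,\lambda-\nu,\nu)| = |\mathcal{H}^{\circ}(\mu,\nu,\lambda)| = c_{\mu,\nu}^{\lambda}$.
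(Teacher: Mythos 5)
You should first note that the paper itself does not prove this lemma: it is quoted from \cite{GZ85}, and the paper's own machinery (Theorem \ref{HT12} together with the Knutson--Tao count of hives, or the tableau bijections of Sections 4--5) is what re-derives it. Your strategy — a derived-array bijection with hives plus the count $|\mathcal{H}^{\circ}(\mu,\nu,\lambda)|=c^{\lambda}_{\mu,\nu}$ from \cite{KT99} — is essentially the Theorem \ref{HT12} route, and it is viable; but your execution has a genuine gap. The map $H\mapsto T_1(H)$ does not land in $GZ(\mu,\lambda-\nu,\nu)$. By Lemma \ref{l-HT12-w}, the weight of $T_1(H)$ is $\nu^*-\lambda^*=(\lambda_n-\nu_n,\dotsc,\lambda_1-\nu_1)$, the reversal of $\lambda-\nu$, not $\lambda-\nu$ itself. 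Likewise, the condition equivalent to IC(2) for $T_2(H)$ is the exponent bound on the dual array, $\varepsilon^{(i)}_j(T_1^*(H))\leq \nu^*_i-\nu^*_{i+1}$ (Proposition \ref{p-HT12-3}), not $\varepsilon^{(i)}_j(T_1(H))\leq \nu_i-\nu_{i+1}$. This is not a harmless matter of ``the order dictated by the indexing convention'': the exponents $\varepsilon^{(i)}_j$ are anchored at the left ends of rows (the sum runs over $h<j$), whereas the quantity your telescoping of $y^{(i)}_j-y^{(i+1)}_{j+1}$ actually produces is the mirrored sum anchored at the right ends — which is exactly the exponent of $T_1^*$, a genuinely different linear functional on the entries of $T_1$. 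Your proposed repair, a reflection of the hive ``fixing the $\mu$-edge and interchanging the $\lambda$- and $\nu$-edges,'' does not rescue this: reflecting a hive reverses each boundary edge and flips the sign of its increments relative to the anchored corner $h_{0,0}=0$, so it sends $\mathcal{H}^{\circ}(\mu,\nu,\lambda)$ to hives whose boundary data are the duals $\mu^*,\nu^*,\lambda^*$ (even the ``fixed'' $\mu$-edge acquires data $\mu^*$), not a permutation of $\mu,\nu,\lambda$. Following that through lands you precisely at Theorem \ref{HT12}(1), i.e.\ at $GZ(\mu^*,\lambda^*-\nu^*,\nu^*)$, and returning to the unstarred statement then requires the symmetry $c^{\lambda}_{\mu,\nu}=c^{\lambda^*}_{\mu^*,\nu^*}$ plus a translation argument (starred weights are not polynomial, while hives were defined with nonnegative entries) — none of which your proposal supplies.

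The clean repair inside your own framework is to use the other derived array: applying Theorem \ref{HT12}(2) with the roles of $\mu$ and $\nu$ exchanged, $H\mapsto T_2(H)$ is a bijection from $\mathcal{H}^{\circ}(\nu,\mu,\lambda)$ onto $GZ(\mu,\lambda-\nu,\nu)$, whence $|GZ(\mu,\lambda-\nu,\nu)|=|\mathcal{H}^{\circ}(\nu,\mu,\lambda)|=c^{\lambda}_{\nu,\mu}=c^{\lambda}_{\mu,\nu}$. Note that even this needs the commutativity $c^{\lambda}_{\nu,\mu}=c^{\lambda}_{\mu,\nu}$ as an explicit extra ingredient, on top of the hive count. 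You should also be aware of the logical economy issue: the count of hives in \cite{KT99} is itself usually established via a bijection with LR tableaux, so a more self-contained route to this lemma is the paper's Theorem \ref{CTGZ} (companion tableaux), which, with $\mu$ and $\nu$ interchanged, gives a direct bijection $LR(\lambda/\nu,\mu)\to GZ(\mu,\lambda-\nu,\nu)$ and reduces the lemma to the standard count of LR tableaux.
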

The elements of $GZ(\mu,\lambda-\nu,\nu)$ will be called \textit{GZ schemes}.

\subsection{}

Note that, for a $h$-array $H$, since the derived $t$-arrays are defined from the differences 
of the entries in $H$, if the boundaries of $H$ are fixed, then 
any one of the derived $t$-array of $H$ uniquely determines $H$. Moreover, we can 
characterize the derived $t$-arrays as follows.

\begin{theorem}\label{HT12}
For a $h$-array $H$ in $\mathcal{H}(\mu,\nu,\lambda)$, consider its derived $t$-arrays
$T_1(H)$ and $T_2(H)$.
\begin{enumerate}
\item $H$ is a hive if and only if $T^*_1(H)=(T_1(H))^*$ is a GZ scheme 
in $GZ(\mu^*, \lambda^* - \nu^*,\nu^*)$;

\item $H$ is a hive if and only if $T_2(H)$ is a GZ scheme 
in $GZ(\nu, \lambda-\mu, \mu)$.
\end{enumerate}
\end{theorem}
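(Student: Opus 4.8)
The plan is to reduce everything to Theorem \ref{hive2gt2}, which already identifies hives with pairs of GT patterns, and to recognize the extra exponent inequality that upgrades a GT pattern to a GZ scheme as a single family of rhombus conditions. First I would note that for $H\in\mathcal{H}(\mu,\nu,\lambda)$ the type and weight of the derived arrays are forced by the boundary data \eqref{hivebdy} alone. Writing $y_h^{(i)}=h_{h,i}-h_{h-1,i}$, the $(a{+}1)$st entry of the $n$th row of $T_2(H)$ is $h_{a+1,n}-h_{a,n}=\nu_{a+1}$, so $T_2(H)$ has type $\nu$; summing the $i$th row and telescoping against the boundary gives $\sum_{k\le i}y_k^{(i)}=(\lambda_1+\cdots+\lambda_i)-(\mu_1+\cdots+\mu_i)$, hence weight $\lambda-\mu$. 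The analogous computation for $T_1(H)=(x_j^{(i)})$, using $x_j^{(i)}=h_{n-i,\,n-i+j}-h_{n-i,\,n-i+j-1}$, yields type $\mu$ and weight $w_i=\lambda_{n+1-i}-\nu_{n+1-i}$; since dualizing ($s_j^{(i)}=-t_{i+1-j}^{(i)}$) reflects the type and negates the weight, $T_1^*(H)$ has type $\mu^*$ and weight $\lambda^*-\nu^*$. Thus membership in $GZ(\nu,\lambda-\mu,\mu)$ (resp.\ $GZ(\mu^*,\lambda^*-\nu^*,\nu^*)$) reduces to verifying the interlacing conditions together with the exponent inequality.

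The heart of the argument, and the step I expect to be most delicate, is translating the exponent inequality into a rhombus condition. Substituting $y_h^{(i)}=h_{h,i}-h_{h-1,i}$ into $\varepsilon_j^{(i)}(T_2)$ and telescoping the sum over $h$ collapses the first-index differences, leaving a single second difference $h_{j-1,i+1}-2h_{j-1,i}+h_{j-1,i-1}$ minus the boundary term $h_{0,i+1}-2h_{0,i}+h_{0,i-1}=\mu_{i+1}-\mu_i$; combining with the residual term $y_j^{(i+1)}-y_j^{(i)}$ and cancelling should give
\begin{equation*}
\varepsilon_j^{(i)}(T_2)=\big(h_{j,i+1}+h_{j-1,i-1}\big)-\big(h_{j,i}+h_{j-1,i}\big)+(\mu_i-\mu_{i+1}).
\end{equation*}
Hence $\varepsilon_j^{(i)}(T_2)\le\mu_i-\mu_{i+1}$ is exactly $h_{j-1,i}+h_{j,i}\ge h_{j,i+1}+h_{j-1,i-1}$, which is RC(2) at $(a,b)=(j,i)$. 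Carefully matching the index ranges of the exponents against the admissible rhombi is where I expect the main friction.

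To close part (2) I would combine this with the two dictionaries already in hand. By \eqref{tete} the difference $y_{a+1}^{(b+1)}-y_{a+1}^{(b)}$ equals $x_{b-a}^{(n-a-1)}-x_{b+1-a}^{(n-a)}$, so IC(1) for $T_2$ is \emph{equivalent} to IC(2) for $T_1$; and by Proposition \ref{IC-Rhom}(2),(4), RC(2) is equivalent to IC(1) for $T_1$. Therefore, if $T_2(H)\in GZ(\nu,\lambda-\mu,\mu)$, then $T_2$ satisfies IC(1) and IC(2) and RC(2) holds, which forces $T_1$ to satisfy both IC(1) (from RC(2)) and IC(2) (from IC(1) for $T_2$); both derived arrays are then GT patterns, and Theorem \ref{hive2gt2} gives that $H$ is a hive. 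The converse is immediate, since a hive makes $T_2$ a GT pattern and makes RC(2) hold.

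Part (1) runs on the same template, applied to the dual array, and the only genuinely new computation is the exponent of $T_1^*$. Writing $s_h^{(i)}=h_{n-i,\,n-h}-h_{n-i,\,n+1-h}$ and telescoping as before (now the boundary term is drawn from the side $h_{\bullet,n}$, producing the constant $\nu^*_i-\nu^*_{i+1}$), I expect to obtain that $\varepsilon_j^{(i)}(T_1^*)\le\nu^*_i-\nu^*_{i+1}$ holds if and only if $h_{n-i,n-j}+h_{n-i,n+1-j}\ge h_{n-i+1,n+1-j}+h_{n-i-1,n-j}$, i.e.\ RC(3) at $(a,b)=(n-i,n-j)$. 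Since $T_1^*$ is a GT pattern exactly when $T_1$ is, and since RC(3) is equivalent to IC(2) for $T_2$ by Proposition \ref{IC-Rhom}(3),(5), the same closing argument applies: $T_1^*(H)\in GZ(\mu^*,\lambda^*-\nu^*,\nu^*)$ forces $T_1$ to be a GT pattern and $T_2$ to satisfy IC(2), whence (using \eqref{tete} once more for IC(1) of $T_2$) both $T_1$ and $T_2$ are GT patterns and $H$ is a hive. The main obstacle here is purely computational---keeping track of the reflection-and-negation in the exponent of $T_1^*$---but the combinatorial skeleton is identical to that of part (2).
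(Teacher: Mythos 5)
Your proposal is correct, and the two identities you flagged as ``expected'' do hold exactly as stated: substituting $y_h^{(i)}=h_{h,i}-h_{h-1,i}$ and telescoping gives
$\varepsilon_j^{(i)}(T_2)=\bigl(h_{j,i+1}+h_{j-1,i-1}\bigr)-\bigl(h_{j,i}+h_{j-1,i}\bigr)+(\mu_i-\mu_{i+1})$,
so each exponent inequality for $T_2$ is precisely RC(2) at $(a,b)=(j,i)$, and the dual computation identifies each exponent inequality for $T_1^*$ with RC(3) at $(a,b)=(n-i,n-j)$; the index sets $\{1\le j\le i<n\}$ and $\{1\le a\le b<n\}$ match bijectively in both cases, so the friction you anticipated does not materialize, and your type/weight computations coincide with Lemma \ref{l-HT12-w}. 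Where you genuinely diverge from the paper is the central step: the paper never writes the exponents in hive coordinates. It uses the relation \eqref{tete} to express $\varepsilon(T_2)$ in terms of the entries of $T_1$ and $\varepsilon(T_1^*)$ in terms of the entries of $T_2$, proving three pairwise equivalences between the two derived arrays (Proposition \ref{p-HT12-1}: exponents of $T_2$ $\leftrightarrow$ IC(1) of $T_1$; Proposition \ref{p-HT12-2}: IC(1) of $T_2$ $\leftrightarrow$ IC(2) of $T_1$; Proposition \ref{p-HT12-3}: exponents of $T_1^*$ $\leftrightarrow$ IC(2) of $T_2$), and then closes with Theorem \ref{hive2gt2}; Proposition \ref{IC-Rhom} enters only implicitly, through that theorem. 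You instead make the rhombus conditions themselves the intermediary: exponent $\leftrightarrow$ rhombus by direct telescoping, rhombus $\leftrightarrow$ interlacing of the other derived array via Proposition \ref{IC-Rhom}(2),(4) and (3),(5), plus \eqref{tete} for the remaining IC(1)/IC(2) exchange, then Theorem \ref{hive2gt2}. The computations are of comparable length, but your version buys a sharper, local statement---one exponent inequality equals one rhombus inequality---which makes the dictionary of Table \ref{our-conds} (in particular the row RC(3) $\leftrightarrow$ Exponents) immediate, whereas in the paper that correspondence arises only by composing Proposition \ref{p-HT12-3} with Proposition \ref{IC-Rhom}. What the paper's arrangement buys in exchange is the explicit duality between the two derived GT patterns $T_1$ and $T_2$, which is the structure exploited in the fiber-product remark following Theorem \ref{hive2gt2}.
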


Note that this theorem, in particular, gives bijections between hives and GZ schemes:
\begin{equation*}
\begin{array}{ccc}
\mathcal{H}^{\circ}(\mu,\nu,\lambda) & \longrightarrow         & GZ(\mu^*, \lambda^* - \nu^*,\nu^*) \\
 H                           & \longmapsto             & T^*_1(H)                                                            
\end{array}
\end{equation*}
and
\begin{equation*}
\begin{array}{cccc}
\mathcal{H}^{\circ}(\mu,\nu,\lambda) & \longrightarrow    & GZ(\nu, \lambda-\mu, \mu) & \\
 H                          & \longmapsto         & T_2(H)  & ^{.}
\end{array}
\end{equation*}

For the rest of this section, we will prove Theorem \ref{HT12} by showing the following.
\begin{enumerate}
\item[(a)] $T^*_1(H)$ satisfies IC(2) if and only if $\varepsilon^{(i)}_j(T_2(H)) \leq \mu_i - \mu_{i+1}$;

\item[(b)] $T^*_1(H)$ satisfies IC(1) if and only if $T_2(H)$ satisfies IC(1);

\item[(c)] $T^*_1(H)$ satisfies $\varepsilon^{(i)}_j(T_1^*(H)) \leq \nu^*_i - \nu^*_{i+1}$ 
if and only if $T_2(H)$ satisfies IC(2).
\end{enumerate}
The weights of the derived $t$ arrays will also be computed. 

\subsection{}

Let us first compute the weights of $T_1(H)$ and $T_2(H)$ 
for $H \in \mathcal{H}(\mu,\nu,\lambda)$.
\begin{lemma}\label{l-HT12-w}
For a $h$-array $H=(h_{a,b})\in \mathcal{H}(\mu ,\nu,\lambda )$,
\begin{enumerate}
\item the weight of $T_1(H)$ is $\nu^* - \lambda^*$, i.e., 
$$(\lambda_{n}- \nu_{n}, \lambda_{n-1} - \nu_{n-1}, \dotsc, \lambda_1 - \nu_1)$$
therefore, the weight of $T^*_1(H)$ is $\lambda^* - \nu^*$;

\item the weight of $T_2(H)$ is $\lambda - \mu$, i.e.,
$$(\lambda_1 - \mu_1, \lambda_2 - \mu_2, \dotsc, \lambda_n - \mu_n).$$
\end{enumerate}
\end{lemma}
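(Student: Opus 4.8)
The plan is to read both weights directly off the boundary data \eqref{hivebdy}, exploiting the fact that each row of a derived $t$-array is a string of consecutive differences of $h$-entries taken along a single line of $H$, so that its sum telescopes to a difference of two boundary values. First I would rewrite the two arrays in the $(i,j)$-indexing of Definition~\ref{def-weights}. Substituting $a=n-i$ and $b+1=n-i+j$ into \eqref{derived-arrays12} gives $x_j^{(i)}=h_{n-i,\,n-i+j}-h_{n-i,\,n-i+j-1}$, while $a=j-1$, $b=i-1$ gives $y_j^{(i)}=h_{j,\,i}-h_{j-1,\,i}$, both for $1\le j\le i\le n$. Summing each $i$th row then telescopes:
\[
\sum_{k=1}^{i} y_k^{(i)} = h_{i,i}-h_{0,i}, \qquad
\sum_{k=1}^{i} x_k^{(i)} = h_{n-i,\,n}-h_{n-i,\,n-i}.
\]
Since the weight is recovered from row sums by $w_1=t_1^{(1)}$ and $w_i=\sum_{k\le i} t_k^{(i)}-\sum_{k\le i-1}t_k^{(i-1)}$, each weight component is a difference of consecutive row sums.

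For $T_2(H)$ the boundary values of \eqref{hivebdy} give $\sum_{k\le i} y_k^{(i)}=(\lambda_1+\cdots+\lambda_i)-(\mu_1+\cdots+\mu_i)$, so $w_i=\lambda_i-\mu_i$ for $2\le i\le n$, and $w_1=y_1^{(1)}=\lambda_1-\mu_1$ agrees; hence the weight is $\lambda-\mu$. The computation for $T_1(H)$ is the same in spirit: the $i$th row sum is $h_{n-i,n}-h_{n-i,n-i}=|\mu|+(\nu_1+\cdots+\nu_{n-i})-(\lambda_1+\cdots+\lambda_{n-i})$, and taking consecutive differences the $|\mu|$ term cancels, leaving $w_i=\lambda_{n-i+1}-\nu_{n-i+1}$ for $2\le i\le n$. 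This already identifies all but the first entry of the weight of $T_1(H)$ with $\nu^*-\lambda^*=(\lambda_n-\nu_n,\dots,\lambda_1-\nu_1)$.

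The one step needing care is the first component $w_1$ of $T_1(H)$, which is by definition just the bottom-row sum $h_{n-1,n}-h_{n-1,n-1}=|\mu|+(\nu_1+\cdots+\nu_{n-1})-(\lambda_1+\cdots+\lambda_{n-1})$; here, unlike the higher components, the $|\mu|$ has no neighbouring row against which to cancel. To rewrite this as $\lambda_n-\nu_n$ I would invoke the corner consistency $|\lambda|=|\mu|+|\nu|$, which holds because \eqref{hivebdy} prescribes the corner entry $h_{n,n}$ twice (as $\lambda_1+\cdots+\lambda_n$ along the right edge and as $|\mu|+\nu_1+\cdots+\nu_n$ along the bottom edge), and these must coincide whenever $\mathcal{H}(\mu,\nu,\lambda)$ is nonempty. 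This pins down the weight of $T_1(H)$ as $\nu^*-\lambda^*$. Finally, the claim for $T_1^*(H)$ is immediate from the remark following Definition~\ref{def-weights} that dualising negates the weight, so the weight of $T_1^*(H)$ equals $-(\nu^*-\lambda^*)=\lambda^*-\nu^*$. The only genuine subtlety is thus the asymmetric treatment of $w_1$ for $T_1$; everything else is bookkeeping in the reparametrization and telescoping.
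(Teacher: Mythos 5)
Your proof is correct and follows essentially the same approach as the paper: telescoping the row sums of the derived $t$-arrays against the boundary data \eqref{hivebdy} and taking consecutive differences, which is exactly the paper's computation for $T_2(H)$. You additionally work out part (1) explicitly (which the paper dismisses as ``similar''), and your observation that $w_1$ of $T_1(H)$ requires the corner consistency $|\lambda|=|\mu|+|\nu|$ forced by the doubly-prescribed entry $h_{n,n}$ is a genuine detail, correctly handled, that the paper's brevity glosses over.
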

\begin{proof}
We will prove the second statement. The proof of the first case is similar. From Definition \ref{def-weights}, \eqref{derived-arrays12} and the expressions for $\lambda$ and $\mu$ in terms of the $h$-array elements it follows
\begin{align*}
w_1  = y_{1}^{(1)}   = h_{1,1} - h_{0,1} = (h_{1,1} - h_{0,0}) + (h_{0,0}- h_{0,1}) = \lambda_{1} - \mu_{1} . 
\end{align*} 
Using the same approach for $w_i$, $i \geq 2$, we see 
\begin{align*}
w_i  &= \sum_{k=1}^{i} y_{k}^{(i)} - \sum_{k=1}^{i-1} y_{k}^{(i-1)} \\
       & = \sum_{k=1}^{i} (h_{k,i} - h_{k-1,i}) -  \sum_{k=1}^{i-1} (h_{k,i-1} - h_{k-1,i-1}) \\
       & = (h_{i,i} - h_{0,i}) - (h_{i-1,i-1} - h_{0,i-1}) \\
       & = \lambda_i  - \mu_i .
\end{align*} 
Therefore $w_i = \lambda_i - \mu_i$ for all $i$, and the weight of $T_2(H)$ is $\lambda - \mu$.
\end{proof}

\subsection{}

Next, we study the relations between the interlacing conditions and the exponents conditions
for derived arrays. Note that, from the definition of dual arrays, a $t$-array $T$ satisfies 
IC(1) if and only if $T^*$ satisfies IC(2), and $T$ satisfies IC(2) if and only if $T^*$ satisfies IC(1).

\begin{proposition}\label{p-HT12-1}
For a $h$-array $H=(h_{a,b}) \in \mathcal{H}(\mu,\nu,\lambda)$ and 
its derived $t$-arrays $T_1(H)=(x^{(i)}_j)$ and $T_2(H)=(y^{(i)}_j)$,
$T_1(H)$ satisfies IC(1) if and only if 
$\varepsilon^{(i)}_j(T_2(H)) \leq \mu_i - \mu_{i+1}$. 
\end{proposition}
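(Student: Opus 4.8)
The plan is to translate both sides of the claimed equivalence into explicit statements about the entries of the $h$-array $H$, using the definitions of the derived $t$-arrays in \eqref{derived-arrays12}, and then show these are literally the same family of inequalities. First I would write out what IC(1) for $T_1(H) = (x_j^{(i)})$ says. By definition IC(1) requires $x_j^{(i+1)} \geq x_j^{(i)}$ for all admissible $i,j$, so I would convert each such inequality, via $x_{b+1-a}^{(n-a)} = h_{a,b+1} - h_{a,b}$, into an inequality among the $h_{a,b}$. The pair of indices $(i,j)$ on the $T_1$ side must be re-expressed in terms of $(a,b)$: writing $i = n-a$ and $j = b+1-a$, the entry $x_j^{(i)}$ corresponds to the SW--NE edge of a fundamental triangle, and comparing $x_j^{(i+1)}$ with $x_j^{(i)}$ amounts to comparing two adjacent such edges, which should collapse to a rhombus-type relation RC(1) among four entries of $H$.

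Next I would expand the right-hand side $\varepsilon_j^{(i)}(T_2(H)) \leq \mu_i - \mu_{i+1}$. Using the exponent formula from the Gelfand--Zelevinsky lemma applied to $T_2 = (y_j^{(i)})$, the telescoping sum $\sum_{h<j}(y_h^{(i+1)} - 2 y_h^{(i)} + y_h^{(i-1)}) + (y_j^{(i+1)} - y_j^{(i)})$ should simplify considerably once each $y$-entry is replaced by its value $y_{a+1}^{(b+1)} = h_{a+1,b+1} - h_{a,b+1}$ (the E--W differences). The key computational observation is that the second-difference terms in $\varepsilon$ telescope, and the identity \eqref{tete} relating $x$- and $y$-differences is exactly the bridge that should let me rewrite the exponent condition on $T_2$ as an IC(1) condition on $T_1$; in fact \eqref{tete} already states $x_{b-a}^{(n-a-1)} - x_{b+1-a}^{(n-a)} = y_{a+1}^{(b+1)} - y_{a+1}^{(b)}$, so differences of adjacent $x$-entries along a row equal differences of $y$-entries along a column. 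I would also use the boundary data $\mu_i = h_{0,i} - h_{0,i-1}$ to see that $\mu_i - \mu_{i+1}$ is itself a second difference of boundary entries, matching the residual boundary term left over after the telescoping.

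The cleanest route is therefore to run both reductions down to a common normal form: show that ``$T_1(H)$ satisfies IC(1)'' is equivalent to RC(2) for $H$ (this is already supplied by statement (2) of Proposition \ref{IC-Rhom}, modulo the $T_3$ half), and then show independently that ``$\varepsilon_j^{(i)}(T_2(H)) \leq \mu_i - \mu_{i+1}$'' is also equivalent to RC(2) for $H$. Matching both to the same rhombus condition then yields the proposition without having to compare the two combinatorial descriptions entry-by-entry. Concretely I would fix a rhombus of the form used after Theorem \ref{hive2gt2}, read off RC(2) as $O + O' \geq A + A'$, and verify that after substituting \eqref{derived-arrays12} this is simultaneously IC(1) for $T_1$ and the single-index exponent bound for $T_2$.

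The main obstacle I anticipate is the bookkeeping in the exponent sum: the quantity $\varepsilon_j^{(i)}(T_2(H))$ is a cumulative sum over $h < j$ of second differences, so a priori the inequality $\varepsilon_j^{(i)} \leq \mu_i - \mu_{i+1}$ couples many $h_{a,b}$ at once and does not obviously reduce to a single four-term rhombus inequality. The nontrivial step is to check that the partial sums telescope so that only the boundary contribution ($\mu_i - \mu_{i+1}$, expressed through $h_{0,i-1}, h_{0,i}, h_{0,i+1}$) and one local $y$-difference survive, collapsing the cumulative condition into a local one indexed by $(a,b)$. Getting the index dictionary between $(i,j)$ for $T_2$ and $(a,b)$ for $H$ exactly right — and confirming that the ranges of $i,j$ match the ranges $1 \leq a \leq b < n$ appearing in RC(2) — is where I expect to spend the most care, but once the telescoping identity is in hand the equivalence should follow directly from \eqref{tete}.
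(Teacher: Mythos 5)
Your proposal is correct, and it pivots on a different object than the paper's proof does. The paper never invokes the rhombus conditions in this argument: it rewrites $\varepsilon_j^{(i)}(T_2(H))$ in terms of the entries of $T_1(H)$ by applying \eqref{tete} inside the sum, telescopes to obtain
$\varepsilon_j^{(i)}(T_2(H)) = (x_i^{(n)} - x_{i+1}^{(n)}) + (x_{i-j+1}^{(n-j)} - x_{i-j}^{(n-j)} + y_j^{(i)} - y_j^{(i-1)})$,
and then applies \eqref{tete} a second time to identify nonpositivity of the second bracket with a single instance of IC(1) for $T_1(H)$. You instead take RC(2) on $H$ as the common normal form: IC(1) for $T_1$ is equivalent to RC(2) by Proposition \ref{IC-Rhom}(2) combined with (4) (the $T_3$ half is absorbed exactly as you say), and the exponent condition reduces to RC(2) by telescoping directly in $h$-coordinates. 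The obstacle you flag does resolve, and uniformly in $j$: substituting $y_h^{(i)} = h_{h,i} - h_{h-1,i}$ from \eqref{derived-arrays12}, all three partial sums telescope, leaving
\begin{equation*}
\varepsilon_j^{(i)}(T_2(H)) - (\mu_i - \mu_{i+1}) \;=\; h_{j,i+1} + h_{j-1,i-1} - h_{j,i} - h_{j-1,i},
\end{equation*}
whose nonpositivity is precisely RC(2) at $(a,b) = (j,i)$; the ranges $1 \leq j \leq i \leq n-1$ and $1 \leq a \leq b < n$ match under this dictionary. What your route buys: lighter bookkeeping (only first differences of $h$ appear and \eqref{tete} is never needed), no separate $j=1$ case (which the paper defers with ``can be shown similarly''), and an explicit chain exponents of $T_2$ $\leftrightarrow$ RC(2) $\leftrightarrow$ IC(1) of $T_1$, in the spirit of the paper's Table \ref{our-conds}. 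What the paper's route buys: the argument stays entirely inside the two derived $t$-arrays, so the proposition is established purely at the level of GT-pattern data without reference to the hive inequalities. One slip to correct: in your first paragraph you predict that comparing $x_j^{(i+1)}$ with $x_j^{(i)}$ collapses to ``RC(1)''; it collapses to RC(2) (RC(1) corresponds instead to IC(2) for $T_1$ and IC(1) for $T_2$), as you yourself state correctly in your third paragraph.
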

\begin{proof}

Let us assume $j>1$. Then the exponent of $T_2(H)$,
\begin{equation*}
\varepsilon _{j}^{(i)}(T_2(H)) = \sum_{1\leq h < j}
\left( (y_{h}^{(i+1)}-y_{h}^{(i)}) - (y_{h}^{(i)} - y_{h}^{(i-1)}) \right)
+ \left( y_{j}^{(i+1)}-y_{j}^{(i)} \right)
\end{equation*}
can be rewritten in terms of the entries of $T_1(H)$. By using \eqref{tete},
\begin{eqnarray*}
\varepsilon _{j}^{(i)}(T_2(H)) &=& \sum_{1\leq h < j}
\left( (x_{i-h+1}^{(n-h)} - x_{i-h+2}^{(n-h+1)}) - (x_{i-h}^{(n-h)} - x_{i-h+1}^{(n-h+1)}) \right) \\
&& + \left( x_{i-j+1}^{(n-j)} - x_{i-j+2}^{(n-j+1)} + y_j^{(i)} \right)  
 - \left( x_{i-j}^{(n-j)}  - x_{i-j+1}^{(n-j+1)} + y_j^{(i-1)} \right)
\end{eqnarray*}
and we see that parts of the consecutive terms cancel to give
\begin{equation}\label{exp-t2h}
\varepsilon _{j}^{(i)}(T_2(H)) 
= \left( x_i^{(n)} - x_{i+1}^{(n)} \right) + \left( x_{i-j+1}^{(n-j)}
 - x_{i-j}^{(n-j)}  + y_j^{(i)} - y_j^{(i-1)}\right).
\end{equation}

Now note that the interlacing condition IC(1) for $T_1(H)$ implies 
$x_{i-j+1}^{(n-j+1)} \geq x_{i-j+1}^{(n-j)}$
or equivalently, by using \eqref{tete},
\begin{eqnarray*}
x_{i-j}^{(n-j)} &\geq & \left( x_{i-j+1}^{(n-j)} +  y_{j}^{(i)} - y_{j}^{(i-1)} \right) 
\end{eqnarray*}
therefore
\begin{eqnarray*}
0 &\geq & \left( x_{i-j+1}^{(n-j)} - x_{i-j}^{(n-j)} +  y_{j}^{(i)} - y_{j}^{(i-1)} \right).
\end{eqnarray*}
Hence, from \eqref{exp-t2h}, the interlacing condition 
IC(1) for $T_1(H)$ is equivalent to 
\begin{equation*}
\varepsilon _{j}^{(i)}(T_2(H)) \leq \left( x_i^{(n)} - x_{i+1}^{(n)} \right)
= \mu_i - \mu_{i+1}.
\end{equation*}

The case $j=1$ can be shown similarly for all $i$.
\end{proof}

\begin{proposition}\label{p-HT12-2}
For a $h$-array $H=(h_{a,b}) \in \mathcal{H}(\mu,\nu,\lambda)$ and 
its derived $t$-arrays $T_1(H)=(x^{(i)}_j)$ and $T_2(H)=(y^{(i)}_j)$,
$T_1(H)$ satisfies IC(2) if and only if $T_2(H)$ satisfies IC(1).
\end{proposition}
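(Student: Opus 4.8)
The plan is to read the equivalence off directly from the identity \eqref{tete}, exactly in the spirit of the proof of Proposition \ref{p-HT12-1}. The key observation is that a single IC(2) inequality for $T_1(H)$ and a single IC(1) inequality for $T_2(H)$ are, after relabelling indices, literally the two sides of \eqref{tete}; so one holds precisely when the other does, and there is nothing analytic to prove beyond matching up the indices.

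First I would put both interlacing conditions into a common index. Recall that IC(2) for $T_1=(x^{(i)}_j)$ asserts $x^{(i)}_j \ge x^{(i+1)}_{j+1}$; setting $j=b-a$ and $i=n-a-1$ turns this into
\[
x_{b-a}^{(n-a-1)} - x_{b+1-a}^{(n-a)} \ge 0 .
\]
Likewise IC(1) for $T_2=(y^{(i)}_j)$ asserts $y^{(i+1)}_j \ge y^{(i)}_j$; setting $j=a+1$ and $i=b$ turns this into
\[
y_{a+1}^{(b+1)} - y_{a+1}^{(b)} \ge 0 .
\]
By \eqref{tete}, which is valid for $0 \le a < b < n$, the left-hand sides of these two displays are equal. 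Hence, for each such pair $(a,b)$, the IC(2) inequality for $T_1$ holds if and only if the corresponding IC(1) inequality for $T_2$ holds.

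Second, I would check that as $(a,b)$ ranges over $0 \le a < b < n$ these instances sweep out precisely all admissible IC(2) conditions for $T_1$ and all admissible IC(1) conditions for $T_2$, with none missing and none repeated. For the $T_1$ side the inverse substitution $a=n-1-i$, $b=n-1-i+j$ turns $0 \le a < b < n$ into $1 \le j \le i \le n-1$, which is exactly the admissible range of IC(2); for the $T_2$ side the inverse substitution $a=j-1$, $b=i$ turns it into the same range $1 \le j \le i \le n-1$, the admissible range of IC(1). Thus $(a,b)$ parametrizes a bijection between the two families of inequalities, and ``$T_1(H)$ satisfies every IC(2) inequality'' is equivalent to ``$T_2(H)$ satisfies every IC(1) inequality,'' which is the claim.

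The argument carries no genuine difficulty; the only thing that can go wrong is the index bookkeeping, so the main point to verify carefully is the bijection of ranges in the second step, since an off-by-one error at a boundary such as $i=n-1$ or $j=i$ would leave some interlacing inequality unaccounted for and break the equivalence. As an independent cross-check I note that the same conclusion is implicit in Proposition \ref{IC-Rhom}(1): rhombus by rhombus, the single inequality RC(1) rewrites both as an IC(2) inequality of $T_1$ and as an IC(1) inequality of $T_2$, so each is separately equivalent to RC(1) and hence to the other, confirming the result.
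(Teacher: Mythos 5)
Your proof is correct and follows essentially the same route as the paper: both arguments read the equivalence directly off the identity \eqref{tete}, matching each IC(2) inequality of $T_1(H)$ with the corresponding IC(1) inequality of $T_2(H)$ and checking that the index substitutions sweep out the full range $1 \leq j \leq i \leq n-1$ on both sides. Your explicit verification of the bijection of ranges (and the cross-check via the rhombus-by-rhombus reading of Proposition \ref{IC-Rhom}(1)) is just a more careful spelling-out of the paper's substitution $i'=n-i+j-1$, $j'=n-i$.
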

\begin{proof}
 
Using the equality \eqref{tete},
\begin{equation*}
\left( x_{j}^{(i)} \geq x_{j+1}^{(i+1)} \right) \hbox{\ \ \ if and only if \ \ } 
\left( y_{n-i}^{(n-i + j)} \geq y_{n-i}^{(n-i+j-1)} \right)
\end{equation*}
and therefore, by setting $i'=n-i+j-1$ and $j'=n-i$, we have
\begin{equation*}
\left( x_{j}^{(i)} \geq x_{j+1}^{(i+1)} \right) \hbox{\ \ \ if and only if \ \ } 
\left( y_{j'}^{(i'+1)} \geq y_{j'}^{(i')} \right)
\end{equation*}
for $1 \leq j \leq i \leq n-1$ and $1 \leq j' \leq i' \leq n-1$. This shows that
IC(2) holds for $T_1(H)$ if and only if IC(1) holds for $T_2(H)$. 
\end{proof}

\begin{proposition}\label{p-HT12-3}
For a $h$-array $H=(h_{a,b}) \in \mathcal{H}(\mu,\nu,\lambda)$ and 
its derived $t$-arrays $T_1(H)=(x^{(i)}_j)$ and $T_2(H)=(y^{(i)}_j)$,
$T^*_1(H)$ satisfies $\varepsilon^{(i)}_j(T_1^*(H)) \leq \nu^*_i - \nu^*_{i+1}$ 
if and only if $T_2(H)$ satisfies IC(2). 
\end{proposition}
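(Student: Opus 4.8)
The plan is to repeat, for the dual array $T_1^*(H)$, the telescoping computation used in Proposition \ref{p-HT12-1}. Write $T_1^*(H)=(s_j^{(i)})$ with $s_j^{(i)}=-x_{i+1-j}^{(i)}$. First I would record the identity $t_h^{(i+1)}-2t_h^{(i)}+t_h^{(i-1)}=d_h^{(i)}-d_h^{(i-1)}$, where $d_h^{(i)}=t_h^{(i+1)}-t_h^{(i)}$, so that the exponent collapses to
\begin{equation*}
\varepsilon_j^{(i)}(T_1^*(H))=\sum_{h=1}^{j} d_h^{(i)} - \sum_{h=1}^{j-1} d_h^{(i-1)}.
\end{equation*}
Substituting the dual entries gives $d_h^{(i)}=x_{i+1-h}^{(i)}-x_{i+2-h}^{(i+1)}$; that is, each first difference of $T_1^*(H)$ is an IC(2)-defect of $T_1(H)$.

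Next I would pass to $T_2(H)$ through \eqref{tete}. The key move is to note that, under the substitution $a=n-1-i$ and $b=n-h$, equation \eqref{tete} reads $d_h^{(i)}=y_{n-i}^{(n-h+1)}-y_{n-i}^{(n-h)}$: a difference in which the lower index $n-i$ is constant and only the upper index moves. Consequently both sums telescope in the upper index, and I obtain
\begin{equation*}
\varepsilon_j^{(i)}(T_1^*(H)) = \bigl(y_{n-i}^{(n)}-y_{n-i+1}^{(n)}\bigr) + \bigl(y_{n-i+1}^{(n-j+1)}-y_{n-i}^{(n-j)}\bigr).
\end{equation*}
Because $y_k^{(n)}=h_{k,n}-h_{k-1,n}=\nu_k$ by \eqref{hivebdy} and \eqref{derived-arrays12}, the first bracket equals $\nu_{n-i}-\nu_{n-i+1}$, which is exactly $\nu^*_i-\nu^*_{i+1}$ by the definition $\nu^*_k=-\nu_{n+1-k}$. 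Therefore $\varepsilon_j^{(i)}(T_1^*(H))\le \nu^*_i-\nu^*_{i+1}$ holds precisely when the second bracket is nonpositive, that is $y_{n-i}^{(n-j)}\ge y_{n-i+1}^{(n-j+1)}$, and this is IC(2) for $T_2(H)$ at the position $(i',j')=(n-j,\,n-i)$.

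I expect no conceptual difficulty beyond careful bookkeeping. The steps requiring attention are: checking that the substitution $a=n-1-i$, $b=n-h$ keeps the range $0\le a<b<n$ of \eqref{tete} valid for all exponent indices $1\le h\le j\le i\le n-1$; verifying that $(i',j')=(n-j,n-i)$ runs over exactly the admissible IC(2) positions $1\le j'\le i'\le n-1$ of $T_2(H)$ as $(i,j)$ varies, so that the equivalence is genuinely bidirectional; and handling the endpoint $j=1$ separately, where the second sum is empty, exactly as in Proposition \ref{p-HT12-1}. The main obstacle, then, is purely the alignment of the several shifted index pairs through the dualization and the application of \eqref{tete}; once those are in place the two telescopings deliver the result immediately.
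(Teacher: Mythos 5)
Your proposal is correct and follows essentially the same route as the paper: rewrite the exponents of $T_1^*(H)$ via the identity \eqref{tete} in terms of the entries of $T_2(H)$, telescope, and identify the boundary term with $\nu^*_i-\nu^*_{i+1}$. Your packaging is slightly cleaner, since you obtain the exact identity $\varepsilon_j^{(i)}(T_1^*(H))=(\nu^*_i-\nu^*_{i+1})+\bigl(y_{n-i+1}^{(n-j+1)}-y_{n-i}^{(n-j)}\bigr)$, which makes the position-by-position equivalence (and hence the full ``if and only if,'' via the bijection $(i,j)\mapsto(n-j,n-i)$ of index sets) immediate, whereas the paper phrases the same computation as an inequality chain under the assumption of IC(2).
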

\begin{proof}

Let us assume $j>1$. Write the exponents of $T_1^{*}(H)=(s_j^{(i)})$ using
$s_{j}^{(i)} = - x_{i+1 - j}^{(i)}$.
\begin{align*}
\varepsilon _{j}^{(i)}(T_{1}^{*}(H))  =& \sum_{1\leq h<j}
\left( -x_{i-h+2}^{(i+1)} + 2x_{i-h +1}^{(i)} -x_{i-h}^{(i-1)} \right)\\
&+\left( -x_{i-j+2}^{(i+1)} + x_{i-j+1}^{(i)} \right) \\
=& \sum_{1\leq h<j}
\left( (x_{i-h +1}^{(i)}-x_{i-h+2}^{(i+1)}) -  (x_{i-h}^{(i-1)} - x_{i-h +1}^{(i)}) \right) \\
&+ \left( x_{i-j+1}^{(i)}-x_{i-j+2}^{(i+1)} \right) 
\end{align*}%
Then, using the identity \eqref{tete}, we can rewrite the exponents 
in terms of the entries of $T_2(H)$ as
\begin{align*}
\varepsilon _{j}^{(i)}(T_{1}^{*}(H))  =& \sum_{1\leq h<j}
\left( (y_{n-i}^{(n-h+1)}-y_{n-i}^{(n-h)} ) -  (y_{n-i+1}^{(n-h+1)} - y_{n-i+1}^{(n-h)}) \right) \\ 
& + \left( y_{n-i}^{(n-j+1)}-y_{n-i}^{(n-j)} \right) \\
 \leq & \sum_{1\leq h<j}
\left( (y_{n-i}^{(n-h+1)}-y_{n-i}^{(n-h)}) -  (y_{n-i+1}^{(n-h+1)} - y_{n-i+1}^{(n-h)}) \right) \\
& + \left( y_{n-i}^{(n-j+1)}-y_{n-i+1}^{(n-j+1)} \right) 
\end{align*}
where the inequality is by IC(2): $y_{n-i}^{(n-j)} \geq y_{n-i+1}^{(n-j+1)}$ in $T_2(H)$. 
Parts of the consecutive terms in the right hand side cancel to give
\begin{align*}
\varepsilon _{j}^{(i)}(T_{1}^{*}(H))  \leq & \left( (y_{n-i}^{(n)}-y_{n-i}^{(n-j+1)} ) - ( y_{n-i+1}^{(n)} - y_{n-i+1}^{(n-j+1)}) \right) \\
& + \left( y_{n-i}^{(n-j+1)}-y_{n-i+1}^{(n-j+1)} \right) \\
=&  \left( y_{n-i}^{(n)} -  y_{n-i+1}^{(n)} \right) = \nu_{n-i} - \nu_{n-i+1}  
= \nu^{*}_{i} - \nu^{*}_{i+1}.
\end{align*}%

So the interlacing condition IC(2) for $T_2(H)$ is equivalent to 
\begin{equation*}
\varepsilon _{j}^{(i)}(T_{1}^{*}(H)) \leq \nu^{*}_{i} - \nu^{*}_{i+1}
\end{equation*} 
as required. The case $j=1$ can be shown similarly for all $i$.
\end{proof}

\subsection{}

Suppose we have a hive H. From Lemma \ref{l-HT12-w}, the weights of $T^*_1(H)$ and $T_2(H)$ are 
$\lambda^* - \nu^*$ and $\lambda-\mu$, respectively.
Theorem \ref{hive2gt2} states that $H$ is a hive if and only if $T_1(H)$ and $T_2(H)$, 
and hence $T^*_1(H)$ and $T_2(H)$, satisfy both $IC(1)$ and $IC(2)$.
Therefore since $H$ is a hive, Proposition \ref{p-HT12-1} and
Proposition \ref{p-HT12-3} imply $T^*_1(H)$ and $T_2(H)$ satisfy the exponent conditions, 
and consequently they are GZ schemes in $GZ(\mu^*, \lambda^* - \nu^*, \nu^*)$
and $GZ(\nu, \lambda - \mu, \mu)$, respectively.

Conversely, if $T^*_1(H)$ is a GZ scheme from $GZ(\mu^*, \lambda^* - \nu^*, \nu^*)$ 
it satisfies IC(1), IC(2), and the exponent condition, 
thus from Propositions \ref{p-HT12-1} -- \ref{p-HT12-3}, 
$T_2(H)$ is a GZ scheme. In particular, $T_1(H)$ and $T_2(H)$ are GT patterns,
meaning $H$ is a hive by Theorem \ref{hive2gt2}. Similarly, 
if $T_2(H) \in GZ(\nu, \lambda - \mu, \mu)$, then $H$ is a hive. 
This proves Theorem \ref{HT12}.


\section{LR Tableaux and GT Patterns I}


In this section we introduce a bijection  between LR tableaux and GZ
schemes (Theorem \ref{LRGZ1}). In proving this we will see that 
the semistandard and Yamanouchi conditions for tableaux are equivalent to,
respectively, the interlacing and exponent conditions for $t$-arrays. 

As an interesting consequence we then combine Theorem \ref{LRGZ1} with 
Theorem \ref{HT12} (1) to arrive at a correspondence between LR tableaux and 
hives (Corollary \ref{bij}).
It turns out that Corollary \ref{bij} is equivalent to \cite[(3.3)]{KTT1}, so we compare the two constructions. 
The main difference is that our method has an intermediate GZ scheme, which is an artefact of composing Theorem \ref{LRGZ1} and Theorem \ref{HT12}. 
To conclude the section we summarise how the conditions on LR tableaux (semistandard and Yamanouchi conditions), GZ schemes (semistandard and exponent conditions) and hives (the rhombus conditions) are translated by the bijections. 


The reader may find relevant results and further developments in, for example, 
\cite{BK, BZ1, BZ2, BZ3, DK05, KTT1, KTT2, PV05}.

\subsection{A well-known bijection between semistandard tableaux and GT patterns}

Our bijection between LR tableaux and GZ schemes is an extension of a well-known bijection
between semistandard tableaux and GT patterns, seen in, for example, \cite{GZ85}. We now review this bijection 
and state it in the form most useful for our purposes. For this we require some relevant notation. 

A non-skew semistandard tableau $Y$ is uniquely determined by its associated matrix 
$(a_{i,j}(Y))$ where
\begin{equation}\label{aij}
a_{i,j}(Y) = \hbox{\ the number of $i$'s in the $j$th row}
\end{equation}
for all $1 \leq i, j \leq n$. Note that $a_{i,j}(Y) = 0$ for $i<j$.
We also note that $\sum_{k=1}^n a_{k,j}(Y)$ for $1 \leq j \leq n$ 
give the shape of the tableau $Y$, and $\sum_{k=1}^n a_{i,k}(Y)$ for $1 \leq i \leq n$ give 
the content of $Y$. The reader is warned that these are \textit{not} the same $a_{ij}$ as those in \cite[(3.4)]{KTT1}. 
Those label hive entries, not content of a tableau. 

We remark that if $Y$ is a semistandard tableau on the skew shape $\lambda/\mu$, then 
the $a_{i,j}(Y)$'s are well defined, and the $a_{i,j}(Y)$'s with $\lambda$ or $\mu$ uniquely define $Y$.
It is possible to develop the theory of tableaux exclusively in terms of 
their associated matrices. See \cite{DK05} for this direction.

Now consider a semistandard Young tableau. Removing all instances of the largest entry simultaneously yields a tableau with a new shape. Repeating this process, we would achieve a list of successively shrinking shapes, which written downwards would form the rows of a GT pattern. This process is a bijection. See Example \ref{GT-SSYT-bij}. 

It is easy to symbolically describe the inverse of the bijection. Given a GT pattern $T=(t_{j}^{(i)})$ of type $\lambda$ with non-negative entries, it creates a semistandard Young tableau $Y_{T}$  of shape $\lambda$ whose entries are elements of $\{1,2,\dotsc ,n\}$ and defined by
\begin{equation}\label{aij-tij}
a_{i,j}(Y_{T}) = t_{j}^{(i)}-t_{j}^{(i-1)} 
\end{equation}%
for $1 \leq i, j \leq n$ with the conventions 
\begin{equation*}
 t_{j}^{(i)} = 0 \hbox{\ \ for \ \ } j > i \geq 0.
\end{equation*}
Manipulating \eqref{aij-tij}, it follows that the bijection takes a semistandard tableau $Y$ and creates a GT pattern $T_Y=(t_{j}^{(i)})$ 
according to the rule
\begin{equation}\label{gt2ssoriginal}
t_j^{(i)} =  \sum_{k=1}^{i} a_{k,j}(Y)
\end{equation}
for $ 1 \leq j \leq i \leq n$. Since $a_{k,j}(Y) = 0$ for $k<j$ in every 
non-skew semistandard tableau $Y$, we can in fact write this as
\begin{equation}\label{gt2ss}
t_j^{(i)} =  \sum_{k=j}^{i} a_{k,j}(Y).
\end{equation}
See also, for example, \cite[\S 8.1.2]{GW09} or \cite{Ki08} for further background on this bijection. 

\begin{example} \label{GT-SSYT-bij}As an example we apply the bijection to the tableau	
\begin{equation*}
\young(112,23,3)
\end{equation*}
and list the successive shapes $\lambda^{(i)}$ as they are created. 
\begin{equation*}
\includegraphics[scale=0.8]{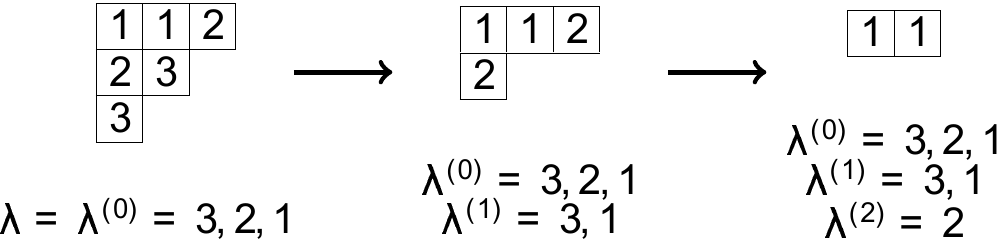}
\end{equation*}

Clearly, the shapes form a GT pattern. It is straightforward to check that the expressions \eqref{gt2ss} and \eqref{aij-tij} both hold. 
\end{example}

Under this bijection, the content of the tableau is equal to the weight of the $t$-array. 
We also note that in this bijection, the
semistandard condition on the tableau is implied by the interlacing
conditions on the $t$-array and vice versa (cf. Remark \ref{000}).

\subsection{A well-known bijection between semistandard skew tableaux and truncated GT patterns}

The bijection of $\S 4.1$ can be extended to act on skew tableaux. Again, this is a well known result included in \cite{GZ85}, \cite{GZ86} and \cite{BZ2}, among others. 

\begin{lemma}\label{conversion}
There is a bijection between the set of skew semistandard Young tableaux of 
shape $\lambda/\mu$ with entries from $\{1,2,\dotsc, n\}$ and the set of 
GT patterns for $GL_{2n}$
whose type is $\lambda'=(\lambda_{1},\dotsc ,\lambda_{n},0,\cdots ,0) \in \mathbb{Z}^{2n}$ 
and whose $k$th row is $(\mu_{1},\mu_{2},\dotsc ,\mu_{k})$ for $1\leq k\leq n$.
\begin{proof}
For a given semistandard Young tableau $Y$ of shape $\lambda/\mu$, replace the
$i$ entries with $(n+i)$'s for $1\leq i\leq n$, then fill in the empty boxes in the 
$\ell$th row of $Y$ with $\ell$'s for $1\leq \ell \leq n$. Then this process uniquely
determines a non-skew semistandard Young tableau of shape $\lambda$ with entries from 
$\{1,2,\dotsc ,2n\}$, and under the bijection given by \eqref{aij-tij},
its corresponding GT pattern for $GL_{2n}$ is the one described in the
statement.
\end{proof}
\end{lemma}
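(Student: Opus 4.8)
The plan is to exhibit an explicit, manifestly invertible construction turning a skew semistandard tableau into a non-skew one whose entry-shape data is already understood by the bijection of $\S 4.1$. The key observation is that the statement is really about repackaging skew data as non-skew data: a GT pattern whose bottom $n-1$ rows are forced to coincide with the rows of the smaller partition $\mu$ is exactly a non-skew GT pattern for $GL_{2n}$ whose lower truncation is rigid. So I would first describe the map $Y \mapsto \widehat{Y}$ in the forward direction, then argue both that $\widehat{Y}$ is a genuine non-skew semistandard tableau and that the construction is reversible.

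First I would define the map on tableaux. Given a skew semistandard $Y$ of shape $\lambda/\mu$ with entries in $\{1,\dotsc,n\}$, relabel every entry $i$ as $n+i$, and then fill each previously empty box in row $\ell$ (the boxes of $\mu$) with the value $\ell$. I must verify that the result $\widehat{Y}$ is semistandard on the full shape $\lambda$ with entries in $\{1,\dotsc,2n\}$. The rows are fine: in row $\ell$ the prefix is a constant string of $\ell$'s followed by the relabelled skew entries, all of which are $\geq n+1 > \ell$, so weak increase holds. The columns require a short check: within the $\mu$-part, column strictness says the filler in row $\ell$ is $\ell$ and the box directly below (if it is still in $\mu$) sits in row $\ell+1$ and holds $\ell+1$, so strict increase holds; across the boundary between the $\mu$-part and the skew part, a filler $\ell$ sits strictly above a relabelled entry $n+i \geq n+1 > \ell$; and within the skew part, strictness is inherited from $Y$ under the order-preserving relabelling $i \mapsto n+i$.

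Next I would identify the GT pattern attached to $\widehat{Y}$ via \eqref{aij-tij} and read off its type and its lower rows. Since $\widehat{Y}$ has shape $\lambda$, its type (the $2n$th row) is $\lambda' = (\lambda_1,\dotsc,\lambda_n,0,\dotsc,0)$. For the $k$th row with $1 \leq k \leq n$, formula \eqref{gt2ssoriginal} gives $t_j^{(k)} = \sum_{m=1}^{k} a_{m,j}(\widehat{Y})$, and because the only entries of $\widehat{Y}$ that are $\leq n$ are the fillers, with exactly $\mu_j$ boxes filled by the value $j$ in row $j$, one computes that the column-$j$ count of values $\leq k$ is precisely $\mu_j$ for $j \leq k$. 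Hence the $k$th row equals $(\mu_1,\dotsc,\mu_k)$, as claimed.

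Finally I would argue bijectivity. The map $Y \mapsto \widehat{Y}$ composed with the $\S 4.1$ bijection $\widehat{Y} \mapsto T_{\widehat{Y}}$ lands in the prescribed set of GT patterns, and each step is individually invertible: the $\S 4.1$ correspondence is already a bijection between non-skew semistandard tableaux and GT patterns, and $Y \mapsto \widehat{Y}$ is inverted by deleting every box holding a value $\leq n$ and relabelling $n+i \mapsto i$, which recovers the skew shape $\lambda/\mu$ precisely because the deleted boxes are exactly the $\mu$-boxes. The main obstacle, though it is mild, is bookkeeping: one must confirm that the image of $Y \mapsto \widehat{Y}$ is \emph{exactly} the set of non-skew semistandard tableaux whose lower row data is forced to be $\mu$, i.e. that the rigidity of the bottom $n$ rows characterizes the image, so that restricting the $\S 4.1$ bijection to this subset yields a bijection onto the truncated GT patterns described in the statement rather than merely an injection.
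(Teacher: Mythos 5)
Your proposal is correct and follows essentially the same route as the paper's own proof: the identical relabelling-and-filling construction $Y \mapsto \widehat{Y}$, composed with the standard tableau--pattern bijection of $\S 4.1$. You simply make explicit the semistandardness check, the computation of the bottom $n$ rows, and the image characterization that the paper leaves to the reader.
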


The first half of Example \ref{ex-LR-GZ} shows Lemma \ref{conversion} applied to a skew tableau. 
We remark that the GT pattern for $GL_n$ whose $k$th row is 
$(\mu_{1},\mu_{2},\dotsc ,\mu_{k})$ for $1\leq k\leq n$ corresponds to the highest
weight vector of the representation $V_n^{\mu}$ labelled by a Young diagram $\mu$. In fact, 
the GT patterns described in Lemma \ref{conversion} encode the weight vectors of 
$V_{2n}^{\lambda'}$, which are the highest weight vector for $V_{n}^{\mu}$ 
under the branching of $GL_{2n}$ down to $GL_n$. 

The bottom $n-1$ rows of a GT pattern described by Lemma \ref{conversion} hold redundant information because they are determined by $\mu$. 
It is therefore convention to omit them and achieve what is called a \textit{truncated} GT pattern.
It is also common to omit the upper-right portion of this pattern, since the interlacing conditions force those entries to be zero. For example, the first half of the bijection described by \cite[(3.3)]{KTT1} uses Lemma \ref{conversion} with these conventions.

There is an excellent example of Lemma \ref{conversion} and further explanation in \cite[\S 2]{BK}. 

\subsection{Symbolic forms of the semistandard and Yamanouchi conditions}

We are almost ready to use Lemma \ref{conversion} to establish the bijection between LR tableaux and GZ schemes. However, we first need symbolic forms of both the semistandard and Yamanouchi conditions. 

Let us express the semistandard condition for a tableau $Y$ in terms of the
$a_{i,j}(Y)$ defined in \eqref{aij}. By rearranging the entries 
in each row if necessary, we can always make the entries of $Y$ weakly increasing along each row 
from left to right. The strictly increasing condition on the columns of $Y$ can then be rephrased 
as follows: the number of entries up to $\ell$ in the $(m +1)$th row is 
not bigger than the number of entries up to $(\ell -1)$ in the $m$th row, i.e.,
\begin{equation}\label{ss-aij}
\sum_{k=1}^{\ell -1} a_{k,m}(Y) \geq \sum_{k=1}^{\ell} a_{k,m+1}(Y)
\end{equation}
for $1 \leq \ell \leq n$ and $1 \leq m < n$. 
Here, if $\ell = 1$, then the left hand side is $0$ as an empty sum 
and the inequality implies that $a_{1,m+1}(Y)=0$ for $m \geq 1$. 
Inductively, we can obtain $a_{i,m+1}(Y)=0$ for $m \geq i$ from the inequality with $\ell =i$.
This shows that for a semistandard Young tableau Y,  $a_{i,j}(Y)=0$ for $j>i$, as we noted after \eqref{aij}.

\begin{remark}\label{000}
By using the conversion formula \eqref{gt2ssoriginal}, one can directly compute that
IC(2) on a GT pattern $T$ is equivalent to the semistandard condition \eqref{ss-aij} 
in $Y_T$ corresponding to $T$.  On the other hand, IC(1) in $T$ is equivalent to 
a rather trivial condition $a_{i,j}(Y_T) \geq 0$ for all $i,j$.
\end{remark}

If $Y$ is a skew tableau of shape $\lambda/\mu$, then, using the same argument 
as for \eqref{ss-aij}, it is straightforward to see that we can make $Y$ semistandard
by rearranging elements along each row if and only if
\begin{equation}\label{skew-ss-aij}
\mu_{m+1} + \sum_{k=1}^{\ell} a_{k,m+1}(Y) \leq \mu_m + \sum_{k=1}^{\ell -1} a_{k,m}(Y)
\end{equation}
for $1 \leq \ell \leq n$ and $1 \leq m < n$.
The Yamanouchi condition in a LR tableau $Y$ can be expressed as
\begin{equation}\label{yam-aij}
\sum_{k=1}^{j} a_{i+1,k}(Y) \leq \sum_{k=1}^{j-1} a_{i,k}(Y)
\end{equation}
for $1 \leq j \leq n$ and $1 \leq i < n$.
Here, if $j=1$, then the right hand side is $0$ as an empty sum and 
the inequality implies that $a_{i+1,1}(Y)=0$ for $i \geq 1$. 
Inductively, we can obtain $a_{i+1,\ell}(Y)=0$ for $i \geq \ell$ 
from the inequality with $j = \ell$. This shows that for an LR tableau Y, $a_{i,j}(Y)=0$ 
for $i>j$, as we noted in Remark \ref{remark-Tx} (2).

\subsection{Bijection between LR tableaux and GZ schemes}
We now establish a bijection between LR tableaux and GZ schemes using Lemma \ref{conversion}. 
After applying the lemma to an LR tableau, a center section of the resulting GT pattern is removed. 
Taking the dual  of the removed array we get the desired GZ scheme. 
In doing this we observe how the conditions on the tableau become those of the scheme.

For a specific example of this bijection, see the first half of Example \ref{ex-LR-GZ}. 


\begin{theorem}\label{LRGZ1}
There is a bijection $\phi$ between $LR(\lambda/\mu,\nu)$ and 
$GZ(\mu^{\ast},\lambda^{\ast }-\nu^{\ast },\nu^{\ast })$.
In particular, the semistandard and Yamanouchi conditions in $L \in LR(\lambda/\mu,\nu)$
are equivalent to, respectively, the interlacing and exponent conditions in $\phi(L) \in 
GZ(\mu^{\ast},\lambda^{\ast }-\nu^{\ast },\nu^{\ast })$.
\end{theorem}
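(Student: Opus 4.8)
The plan is to construct the bijection $\phi$ explicitly by composing three maps whose combined effect is understood, and then track how each defining condition on the LR tableau is transformed at each stage. The starting point is Lemma~\ref{conversion}, which already gives a bijection between skew semistandard tableaux of shape $\lambda/\mu$ and a family of GT patterns for $GL_{2n}$ of type $\lambda'=(\lambda_1,\dotsc,\lambda_n,0,\dotsc,0)$ whose bottom $n$ rows are frozen to record $\mu$. So the skeleton of $\phi$ is: first apply Lemma~\ref{conversion} to send $L\in LR(\lambda/\mu,\nu)$ to such a $GL_{2n}$ pattern; then extract the relevant center section (discarding the frozen rows, which carry only the data of $\mu$); and finally take the dual array as defined in \S2.2, which accounts for the starred weights $\mu^*,\lambda^*-\nu^*,\nu^*$ appearing in the target $GZ(\mu^*,\lambda^*-\nu^*,\nu^*)$.

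First I would nail down exactly which entries survive the truncation and re-index them as an $n$-row $t$-array, confirming that the type of the extracted pattern is $\mu$ (so that after dualizing it becomes $\mu^*$) and computing its weight. Since Lemma~\ref{conversion} sends content to weight under the $GL_{2n}$ correspondence, I expect the weight of the extracted array to come out as $\lambda-\nu$ once the contribution of the frozen part is subtracted; dualizing then yields weight $\lambda^*-\nu^*$, matching the middle slot of the $GZ$ triple. This bookkeeping is essentially the same flavour of computation as Lemma~\ref{l-HT12-w}, so I would present it compactly rather than in full.

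The core of the theorem is the claimed equivalence of conditions, and here I would argue on two fronts. For the semistandard condition on $L$: I would rewrite it in the symbolic form \eqref{skew-ss-aij} in terms of the $a_{i,j}(L)$, push it through the conversion formula \eqref{gt2ss} (in its skew incarnation via Lemma~\ref{conversion}), and show it becomes an interlacing inequality on the extracted array; by the duality remark in \S3.4 (that $T$ satisfies IC(1) iff $T^*$ satisfies IC(2), and vice versa) this transports to the interlacing conditions on $\phi(L)$. This parallels Remark~\ref{000}, where IC(2) was already identified with the non-skew semistandard condition \eqref{ss-aij}. For the Yamanouchi condition: I would start from its symbolic form \eqref{yam-aij}, again substitute the $a_{i,j}$ in terms of pattern entries, and show that the resulting inequality is precisely the exponent inequality $\varepsilon_j^{(i)}(\phi(L))\le \nu^*_i-\nu^*_{i+1}$ defining a GZ scheme. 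The bound $\nu^*_i-\nu^*_{i+1}=\nu_{n-i}-\nu_{n-i+1}$ should emerge naturally because the right-hand side of \eqref{yam-aij} involves row sums of $L$ that encode the content $\nu$.

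The hard part will be the Yamanouchi-to-exponent translation. The exponent $\varepsilon_j^{(i)}$ is a telescoping second-difference of pattern entries, so matching it against \eqref{yam-aij} requires carefully aligning indices across the truncation, the $GL_{2n}$-to-$GL_n$ reindexing, and the dualization $s_j^{(i)}=-t_{i+1-j}^{(i)}$ all at once; a single off-by-one slip would misidentify which $\nu$-difference bounds which exponent. I would manage this by choosing the index conventions at the truncation step specifically so that the dual entries line up with the $a_{i,j}(L)$ in the orientation demanded by \eqref{yam-aij}, and then verify the boundary case (the analogue of $j=1$, where an empty sum forces the vanishing $a_{i+1,1}(L)=0$ noted after \eqref{yam-aij}) separately, exactly as the propositions of \S3 handle their $j=1$ cases. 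Bijectivity itself requires no extra work: each of the three constituent maps is invertible, so $\phi$ is a bijection once the conditions are shown to correspond.
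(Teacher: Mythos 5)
Your route is the paper's own: apply Lemma \ref{conversion}, cut out the middle portion of the truncated $GL_{2n}$ pattern, dualize, and translate the semistandard and Yamanouchi conditions through their symbolic forms \eqref{skew-ss-aij} and \eqref{yam-aij} into the interlacing and exponent conditions. That part of the plan is sound and matches the paper's computations step for step.

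The genuine gap is your final claim that ``bijectivity itself requires no extra work: each of the three constituent maps is invertible.'' The extraction step is not merely the removal of the frozen bottom rows carrying $\mu$, as your parenthetical suggests: the truncated $(n+1)$-row pattern splits into an upper-left subarray $T_{X}$, an upper-right subarray $T_{Y}$, and the middle subarray $T_{Z}$, and the map $L \mapsto T_{Z}^{*}$ discards $T_{X}$ and $T_{Y}$ as well. Such a projection is \emph{not} invertible on arbitrary truncated patterns; it is injective on the image of $LR(\lambda/\mu,\nu)$ only because (i) the Yamanouchi condition forces every entry of $T_{X}$ to equal the corresponding $\lambda_j$ (Remark \ref{remark-Tx}(2): the $b$th row of an LR tableau contains no entry exceeding $b$), and (ii) $T_{Y}$ vanishes identically because the type is $\lambda'=(\lambda_1,\dotsc,\lambda_n,0,\dotsc,0)$. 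This observation is the heart of why the construction is a bijection rather than merely a map, and it is absent from your proposal; without it the inverse cannot even be defined. A secondary slip: the weight of the middle subarray is $\nu^{*}-\lambda^{*}$, i.e.\ the reversal $(\lambda_n-\nu_n,\dotsc,\lambda_1-\nu_1)$ of $\lambda-\nu$, not $\lambda-\nu$ itself; since dualization negates the weight without reversing it (see the note after Definition \ref{def-weights}), your chain ``weight $\lambda-\nu$, dualize, get $\lambda^{*}-\nu^{*}$'' does not compute, whereas $-(\nu^{*}-\lambda^{*})=\lambda^{*}-\nu^{*}$ does. The reversal arises precisely because the $k$th row of the middle subarray sits in row $n+k-1$ of the $GL_{2n}$ pattern, so your re-indexing must produce it or the target $GZ(\mu^{*},\lambda^{*}-\nu^{*},\nu^{*})$ will not match.
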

\begin{proof}
Let $L\in LR(\lambda/\mu,\nu)$ be given. By applying Lemma \ref{conversion} we find its corresponding GT pattern $T=(t_{j}^{(i)})$ for $GL_{2n}$ and remove the bottom $n-1$ rows to achieve a truncated GT pattern of $n+1$ rows. 
Furthermore, the truncated pattern for $L$ can be divided into three
subtriangular arrays $T_{X},T_{Y}$ and $T_{Z}$, as in Figure 2. Note that these are the same size. 
 \begin{figure}[hb]\label{fig1}
 \centering
 \includegraphics[scale=0.8]{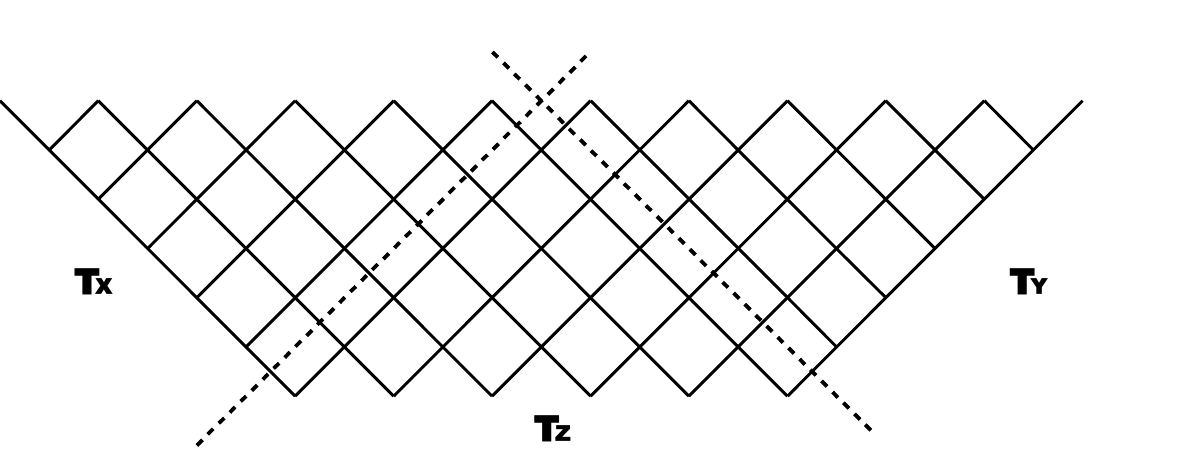}
 \caption{Dividing a truncated GT pattern into 3 subpatterns.}
 \end{figure}

The upper left subarray $T_{X}$ is completely determined by $\lambda$
because of the Yamanouchi condition (see Remark \ref{remark-Tx} (2)). The upper right subarray $T_{Y}$ contains only zeroes. Therefore, given fixed $\lambda, \mu$, 
and $\nu$, the LR tableau $L\in LR(\lambda/\mu,\nu)$ is uniquely determined by $T_{Z}$. 
We want to show that the dual array $T_{Z}^{\ast}$ of $T_{Z}$ is an element 
of $GZ(\mu^{\ast },\lambda^{\ast }-\nu^{\ast },\nu^{\ast })$, and from that establish a bijection
\begin{eqnarray*}
LR(\lambda/\mu,\nu) & \longrightarrow & 
 GZ(\mu^{\ast },\lambda^{\ast }-\nu^{\ast },\nu^{\ast })  \\
L & \longmapsto & T_{Z}^{\ast}  \hspace{35mm}^{.}
\end{eqnarray*}

Let us rewrite the middle subarray $T_Z$ as follows by reflecting it 
over a horizontal line.
\begin{equation*}
T_{Z}=%
\begin{array}{ccccccccc}
t^{(n)}_{1} &  & t^{(n)}_{2} &  & \cdots &  & t^{(n)}_{n-1} &  & t^{(n)}_{n} \\
& t^{(n+1)}_{2} &  & t^{(n+1)}_{3} &  & \cdots &  & t^{(n+1)}_{n} &  \\
&  & t^{(n+2)}_{3} &  &  &  & t^{(n+2)}_{n} &  &  \\
&  &  & \ddots &  & \Ddots &  &  &  \\
&  &  &  & t^{(2n-1)}_{n} &  &  &  &
\end{array}%
\end{equation*}%
Then $\mu_i=t^{(n)}_{i}$ for $1 \leq i \leq n$ and $T_{Z}$ satisfies the
interlacing conditions induced from the truncated GT pattern $T$, which are assured 
by the semistandardness of $L$. Therefore $T_{Z}$ is a GT pattern
of type $\mu$. From the fact that the weights of $T_{X}$, $T_{Y}$, and $T$ are
$(\lambda_{1},\dotsc ,\lambda_{n})$, $(0,\dotsc ,0)$, and 
$(\mu_{1},\dotsc ,\mu_{n}, \nu_{1},\dotsc,\nu_{n} )$ respectively, it is easy to show that the
weight of $T_{Z}$ is $\nu^{\ast}-\lambda^{\ast}$. Hence its dual $T_{Z}^{\ast}$
is a GT pattern (see \S 3.4) of type $\mu^{\ast }$ and weight $\lambda^{\ast }-\nu^{\ast }$. Next, we want to show
that $T_{Z}^{\ast }$ satisfies the exponent conditions.

Let $a_{i,j}=a_{i,j}(L)$, i.e., be the number of $i$'s in 
the $j$th row of $L$ for all $i$ and $j$. Then
\begin{equation}\label{aaa}
a_{i,j} = t^{(n+i)}_{j} - t^{(n+i-1)}_{j} \hbox{\ \ and\ } 
a_{k,k} = \lambda_{k} - t^{(n+k-1)}_{k}
\end{equation}%
for $1\leq i<j\leq n$ and $1\leq k\leq n$. Since the content of $L$ is $\nu$ with
$\nu_{q}=\sum_{k=1}^{n}a_{q,k}$ for $1\leq q \leq n$, we can write
\begin{equation}
(-\nu_{i+1})-(-\nu_{i})=\sum_{k=1}^{n}(a_{i,k}-a_{i+1,k})  \label{num1}
\end{equation}
for $1 \leq i < n$.

On the other hand, from the Yamanouchi condition \eqref{yam-aij} in $L$, we have
\begin{equation*}
\sum_{k=1}^{j} a_{i+1,k}\leq \sum_{k=1}^{j-1}a_{i,k}
\hbox{\ \ or equivalently, \  }
a_{i+1,j}\leq \sum_{k=1}^{j-1}(a_{i,k}-a_{i+1,k}).
\end{equation*}%
Then, using this inequality, (\ref{num1}) becomes
\begin{equation*}
(-\nu_{i+1})-(-\nu_{i})\geq \sum_{k=j+1}^{n} \left( a_{i,k}-a_{i+1,k} \right)+a_{i,j}
\end{equation*}%
and the right hand side is, via (\ref{aaa}), the exponents of $T_{Z}^{\ast}$.
Therefore, $T_{Z}^{\ast} \in GZ(\mu^{\ast },\lambda^{\ast }-\nu^{\ast },\nu^{\ast })$.
\end{proof}

\subsection{A bijection between LR tableaux and hives} 
Composing Theorem \ref{LRGZ1} and Theorem \ref{HT12} (1) gives a bijection between 
the set of LR tableaux and the set of hives.
\begin{equation*}
\begin{array}{ccccc}
                &      &  GZ(\mu^*, \lambda^* - \nu^*,\nu^*) &       &   \\
         & \swarrow \nearrow &             & \searrow \nwarrow  &  \\
LR(\lambda/\mu,\nu)   &      &      &      &  \mathcal{H}^{\circ}(\mu,\nu,\lambda)
\end{array}
\end{equation*}

\begin{corollary}\label{bij} \cite[(3.3)]{KTT1}
There is a bijection between $LR(\lambda/\mu,\nu)$ and
$\mathcal{H}^{\circ}(\mu,\nu,\lambda)$.
\end{corollary}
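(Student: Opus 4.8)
The plan is to obtain the bijection purely by composition, since both Theorem \ref{LRGZ1} and Theorem \ref{HT12}(1) have already been proved and both involve the \emph{same} intermediate set $GZ(\mu^{\ast},\lambda^{\ast}-\nu^{\ast},\nu^{\ast})$. Concretely, Theorem \ref{LRGZ1} supplies a bijection $\phi\colon LR(\lambda/\mu,\nu)\to GZ(\mu^{\ast},\lambda^{\ast}-\nu^{\ast},\nu^{\ast})$, while Theorem \ref{HT12}(1) characterizes, for a fixed boundary $(\mu,\nu,\lambda)$, exactly which $h$-arrays are hives in terms of that same scheme set. So the strategy is to promote Theorem \ref{HT12}(1) to an honest bijection $\psi\colon\mathcal{H}^{\circ}(\mu,\nu,\lambda)\to GZ(\mu^{\ast},\lambda^{\ast}-\nu^{\ast},\nu^{\ast})$, $H\mapsto T_1^{\ast}(H)$, and then define the desired map as $\psi^{-1}\circ\phi$.

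First I would record that $\psi$ is well defined and injective. Well-definedness is immediate from Theorem \ref{HT12}(1): if $H$ is a hive then $T_1^{\ast}(H)$ lies in $GZ(\mu^{\ast},\lambda^{\ast}-\nu^{\ast},\nu^{\ast})$. Injectivity follows from the observation made at the opening of the previous section: once the three boundary sides of an $h$-array in $\mathcal{H}(\mu,\nu,\lambda)$ are fixed, the single derived array $T_1(H)$ — equivalently its dual $T_1^{\ast}(H)$ — determines $H$ completely.

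Next I would establish surjectivity of $\psi$, which is the one point requiring a short argument rather than a citation. Given $S\in GZ(\mu^{\ast},\lambda^{\ast}-\nu^{\ast},\nu^{\ast})$, I set $T_1:=S^{\ast}$, which is a $t$-array of type $\mu$ and weight $\nu^{\ast}-\lambda^{\ast}$ by the duality relations for dual arrays together with Lemma \ref{l-HT12-w}. Using the defining relation $x_{b+1-a}^{(n-a)}=h_{a,b+1}-h_{a,b}$ from \eqref{derived-arrays12} and the prescribed left boundary $h_{0,i}=\mu_1+\cdots+\mu_i$, I reconstruct a unique $h$-array $H$; the type and weight of $T_1$ are precisely what force the remaining two boundaries of $H$ to be the $\nu$- and $\lambda$-sides, so $H\in\mathcal{H}(\mu,\nu,\lambda)$ and $T_1(H)=T_1$. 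Finally, since $S=T_1^{\ast}(H)$ is a GZ scheme, Theorem \ref{HT12}(1) read in the reverse direction guarantees that $H$ is in fact a hive, so $H\in\mathcal{H}^{\circ}(\mu,\nu,\lambda)$ with $\psi(H)=S$.

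With $\phi$ and $\psi$ both bijections onto the common set $GZ(\mu^{\ast},\lambda^{\ast}-\nu^{\ast},\nu^{\ast})$, the composite $\psi^{-1}\circ\phi\colon LR(\lambda/\mu,\nu)\to\mathcal{H}^{\circ}(\mu,\nu,\lambda)$ is a bijection, which is the assertion of the corollary. The only genuine obstacle is the surjectivity of $\psi$ — matching the type and weight data of a GZ scheme with the three prescribed boundary sides of a hive — and this amounts to checking that the boundary conventions of \eqref{hivebdy} are consistent with the type $\mu^{\ast}$ and weight $\lambda^{\ast}-\nu^{\ast}$ recorded in Lemma \ref{l-HT12-w}. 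Everything else is a formal composition of maps already constructed.
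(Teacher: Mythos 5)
Your strategy --- composing $\phi$ from Theorem \ref{LRGZ1} with the inverse of $\psi\colon H \mapsto T_1^{\ast}(H)$ from Theorem \ref{HT12}(1) through the common set $GZ(\mu^{\ast},\lambda^{\ast}-\nu^{\ast},\nu^{\ast})$ --- is exactly the paper's proof, and your well-definedness and injectivity steps are fine. However, the step you yourself flag as the only genuine obstacle, surjectivity of $\psi$, is carried out incorrectly. The relation $x_{b+1-a}^{(n-a)} = h_{a,b+1} - h_{a,b}$ of \eqref{derived-arrays12} ties together only hive entries lying on a common SW--NE line (fixed $a$, varying $b$), and the left edge $(h_{0,0}, h_{0,1}, \dotsc, h_{0,n})$ is itself the line $a=0$: it is parallel to, and disjoint from, every line with $a \geq 1$. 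So $T_1$ together with the left boundary does \emph{not} determine $H$: for any $h$-array $H$ and any constant $c$, adding $c$ to all entries $h_{1,b}$, $1 \leq b \leq n$, changes neither $T_1(H)$ nor the left edge. Your ``unique reconstruction'' therefore does not exist, and the subsequent claim that the type and weight of $T_1$ ``force'' the $\nu$- and $\lambda$-sides has nothing to act on. (The left edge \emph{would} be the correct anchor for $T_2$, whose differences run along the horizontal rows of the hive --- i.e., for part (2) of Theorem \ref{HT12} --- which may be the source of the slip.)

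The repair is straightforward and keeps your architecture intact: anchor along a transversal of the SW--NE lines, namely the bottom edge prescribed by \eqref{hivebdy}, setting $h_{a,n} = \sum_{j=1}^{n}\mu_j + \nu_1 + \dotsb + \nu_a$ and then $h_{a,b} = h_{a,n} - \sum_{c=b}^{n-1} x_{c+1-a}^{(n-a)}$ for $a \leq b \leq n$. Now the type and weight of $T_1 = S^{\ast}$ do exactly the work you intended: type $\mu$ gives $h_{0,b} - h_{0,b-1} = \mu_b$, recovering the $\mu$-edge, while the weight $\nu^{\ast}-\lambda^{\ast}$ gives, via the telescoping row sums $\sum_j x_j^{(n-a)} = h_{a,n} - h_{a,a}$, that $h_{a+1,a+1} - h_{a,a} = \lambda_{a+1}$, recovering the $\lambda$-edge. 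Hence $H \in \mathcal{H}(\mu,\nu,\lambda)$ with $T_1(H) = S^{\ast}$, and the converse direction of Theorem \ref{HT12}(1) then makes $H$ a hive, so $\psi(H)=S$. With this correction your proof is complete and coincides with the paper's, which leaves precisely this surjectivity verification implicit (it merely asserts that one ``can identify a $H$ such that $T_1(H)=T_Z$'').
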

\begin{proof}
For $L \in LR(\lambda/\mu,\nu)$, we compute the corresponding truncated GT pattern
and its middle subarray $T_Z$. Then, by Theorem \ref{LRGZ1}, its dual $T^*_Z$ belongs 
to $GZ(\mu^*, \lambda^* - \nu^*,\nu^*)$. Similarly, for 
$H \in \mathcal{H}^{\circ}(\mu,\nu,\lambda)$, its first derived subarray $T_1(H)$
satisfies $T^*_1(H) \in GZ(\mu^*, \lambda^* - \nu^*,\nu^*)$ by Theorem \ref{HT12}. We can therefore identify a $H$ such that $T_1(H)=T_Z$ and this gives us a bijection from
$LR(\lambda/\mu,\nu)$ to $\mathcal{H}^{\circ}(\mu,\nu,\lambda)$.
\end{proof}

We give an example of Corollary \ref{bij} below. 

\begin{example}\label{ex-LR-GZ}
We start by using Theorem \ref{LRGZ1} to map the LR tableau below to a GT pattern (whose dual array is a GZ scheme). 

Let $\lambda=(11,7,5,3), \mu=(5,3,1,0)$ and $\nu=(7,5,3,2)$. 
The LR tableau from $LR(\lambda/\mu,\nu)$
\begin{equation*}
\young(\ \ \ \ \ 111111,\ \ \ 1222,\ 2333,244)
\end{equation*}
considered as an object for $GL_4$, corresponds to the following truncated
GT pattern.%
\begin{equation*}
\begin{array}{ccccccccccccccc}
11 &  & 7 &  & 5 &  & 3 &  & 0 &  & 0 &  & 0 &  & 0 \\
& 11 &  & 7 &  & 5 &  & 1 &  & 0 &  & 0 &  & 0 &  \\
&  & 11 &  & 7 &  & 2 &  & 1 &  & 0 &  & 0 &  &  \\
&  &  & 11 &  & 4 &  & 1 &  & 0 &  & 0 &  &  &  \\
&  &  &  & 5 &  & 3 &  & 1 &  & 0 &  &  &  &
\end{array}%
\end{equation*}%
Taking out the middle section, we find $T_{Z}$ is
\begin{equation*}
\begin{array}{ccccccc}
5 &   & 3 &   & 1 &   & 0 \\
  & 4 &   & 1 &   & 0 &   \\
  &   & 2 &   & 1 &   &   \\
  &   &   & 1 &   &  &  
\end{array}%
\end{equation*}%
with a dual array $T_{Z}^{\ast}$ belonging to $GZ(\mu^{\ast},\lambda^{\ast }-\nu^{\ast },\nu^{\ast })$.

For the second half of the process, we apply the bijection between hives and GZ schemes (Theorem \ref{HT12} (1)) to $T_{Z}^{\ast}$. We know the corresponding hive $H$ will have boundaries given by $\mu=(5,3,1,0), \nu=(7,5,3,2)$ 
and $\lambda=(11,7,5,3)$ so that it appears as follows
\begin{equation*}
\begin{array}{ccccccccc}
  &   &   &   & 0 &   &    &    & \\
  &  &   & 5 &   & 11 &    &    & \\
  &   & 8 &   & p &   & 18 &    & \\
  & 9 &   & q &   & r &    & 23 &  \\
9 &   & 16 &  & 21&   & 24 &    & 26
 \end{array}%
\end{equation*}%
with some inner entries $p$, $q$ and $r$. Adding $(T_{Z}^{\ast})^{\ast} = T_{Z}$ along the NE-SW diagonals as if it were $T_{1}(H)$
we find $p=15$, $q=16$ and $r=20$. 
\end{example}

Of course, there are many known bijections between LR tableaux and hives. For example, in the appendix of \cite{Bu00} Fulton gave a bijection between LR tableaux and hives using \textit{contratableaux}. It is interesting to note that his first step is to construct partitions from  the hive that are equivalent to the derived $t$-array $T_1$. However, that approach diverges from ours once he uses the partitions to form a contratableau. 

Our Corollary \ref{bij} is simpler than most other bijections between LR tableaux and hives, such as the one by Fulton, but is in fact equivalent to \cite[(3.3)]{KTT1}. There, the authors also take an LR tableau and compute the truncated GT pattern via Lemma \ref{conversion}. They then take row sums in the pattern and separate out a bottom section, which becomes the hive. This is simply our process in reverse, since we separate a section of the truncated GT pattern in Theorem \ref{LRGZ1} by removing $T_{Z}$, and then successively add those entries to the boundary of the hive in Theorem \ref{HT12} (1). 

The key difference, however, is that here we establish GZ schemes as an intermediate object in the bijection, which is absent from the simple and elegant treatment in \cite{KTT1}. This provides background as to why their simple bijection works, and also allows us to track the conditions as they move between objects (see tables \ref{KTT1-conds} and \ref{our-conds}). We also note that Corollary \ref{bij} is not the main focus of our discussion. Rather, it is an interesting consequence that appears when piecing together two sets of combinatorial theory -- the derived $t$-arrays of hives on one hand, and the classical bijection between tableaux and  $t$-arrays on the other. 

To complete the section we combine the two approaches for some insights. Though not clear from our presentation, the elegant formula \cite[(3.4)]{KTT1} states that the elements of the hive $H = (h_{l,m})$ are given by
\begin{equation*}
h_{l,m} = 
\begin{array}{l}
\textrm{the number of empty boxes and entries $\leq l$} \\
\textrm{in the first $m$ rows of the LR tableau.}
\end{array}
\end{equation*}

Finally, in proving \cite[Proposition 3.2]{KTT1}, King \textit{et al.} show that, under their bijection, conditions\footnote{The conditions RC(1), RC(2) and RC(3) are referred to as R2, R3 and R1 respectively by \cite{KTT1}.} on hives correspond to conditions on LR tableaux. Table \ref{KTT1-conds} summarises these equivalences. 

\begin{table}[h!] 
\centering
  \begin{tabular}{ | c | c |}
    \hline
    \textbf{Hive} & \textbf{LR tableau} \\ \hline
    RC(1)  & trivial  \\ \hline
    RC(2) & Semistandard condition \\ \hline
    RC(3)  & Yamanouchi \\ \hline
  \end{tabular}
  \caption{Equivalent LR tableau and \\
  hive conditions in \cite[(3.3)]{KTT1}}\label{KTT1-conds}
\end{table}

Using our results from Proposition \ref{IC-Rhom}, Theorem \ref{HT12}, Remark \ref{000} and Theorem \ref{LRGZ1} we are able to add a middle column showing the equivalent conditions in the intermediate GZ scheme object. See Table \ref{our-conds}.
\begin{table}[h!] 
\centering
  \begin{tabular}{| c | c | c| }
    \hline
    \textbf{Hive} & \textbf{GZ scheme} &\textbf{LR tableau} \\ \hline
  RC(1) & IC(1) & trivial \\ \hline
  RC(2) & IC(2) & Semistandard condition \\ \hline    
  RC(3) & Exponents & Yamanouchi \\ \hline
  \end{tabular}
  \caption{Equivalent LR tableau, GZ scheme \\
 and hive conditions in Corollary \ref{bij}}\label{our-conds}
\end{table}


\section{LR Tableaux and GT Patterns II}


In this section, we show that the semistandard and Yamanouchi conditions 
for tableaux are equivalent to, respectively, the exponent and semistandard conditions
for their \textit{companion tableaux}. This correspondence is obtained by taking
the transpose of matrices describing tableaux. As a result, we show that the companion
tableaux of LR tableaux are GZ schemes under the tableau-pattern bijection.

\subsection{}

For a (skew) semistandard tableau $Y$, as in \eqref{aij}, we let $a_{i,j}(Y)$ denote
the number of $i$'s in the $j$th row.

\begin{definition}
For a (skew) semistandard tableau $Y$, its \textit{companion tableau} $Y^c$
is defined as a non-skew tableau whose entries are weakly increasing along each row and whose number of $i$'s in the $j$th row is equal to $a_{j,i}(Y)$; that is, for $1 \leq i,j \leq n$,
\begin{equation}\label{companion}
a_{i,j}(Y^c) = a_{j,i}(Y).
\end{equation}
\end{definition}

\begin{example}
For the LR tableau $Y$ from Example \ref{ex-LR-GZ}, the associated matrix is
\begin{equation*}
 a_{i,j}(Y) = 
\left[
\begin{array}{cccc}
  6 & 1  & 0 & 0 \\
  0 & 3  & 1 & 1 \\  
  0 & 0  & 3 & 0 \\
  0 & 0  & 0 & 2 
\end{array}%
\right].
\end{equation*}
Then, from its transpose, we have the following companion tableau $Y^c$.
\begin{equation*}
\young(1111112,22234,333,44)
\end{equation*}
Note that $Y$ is of shape $(11,7,5,3)/(5,3,1,0)$ and content $(7,5,3,2)$ while
its companion tableau $Y^c$ is of shape $(7,5,3,2)$ and content $(6,4,4,3)$, 
which is $(11,7,5,3)-(5,3,1,0)$. The GT pattern $T_{Y^c}$ corresponding to $Y^c$ is
\begin{equation*}
\begin{array}{ccccccc}
  7 &   & 5 &   & 3 &   &  2 \\
    & 7 &   & 4 &   & 3 &   \\  
    &   & 7 &   & 3 &   &   \\
    &   &   & 6 &   &  &  
\end{array}.
\end{equation*}
We want to show that this correspondence $Y \mapsto T_{Y^c}$ gives 
another bijection from the set of LR tableaux to the set of GZ schemes.
\end{example}

In \cite{vL01}, van Leeuwen replaced the Yamanouchi condition in LR tableaux
with the semistandard condition in their companion tableaux. Here, we show that 
the semistandard condition in LR tableaux has a counterpart in the companion 
tableaux as well, and then we identify the companion tableaux as an independent
object equivalent to GZ schemes.

\begin{theorem}\label{CTGZ}
For a LR tableau $Y$, we let $Y^{c}$ denote its companion tableau and 
let $T_{Y^{c}}$ denote the GT pattern corresponding to $Y^{c}$. 
The map $\psi(Y)=T_{Y^{c}}$ gives a bijection from $LR(\lambda/\mu, \nu)$ to 
$GZ(\nu, \lambda - \mu, \mu)$. In particular, the Yamanouchi and semistandard 
conditions in $Y$ are equivalent to, respectively, the interlacing condition IC(2) 
and the exponent condition in $T_{Y^{c}}$.
\end{theorem}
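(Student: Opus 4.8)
The plan is to realize $\psi$ as the composition of two bijections: the transpose-of-matrix map $Y \mapsto Y^c$ and the classical tableau--pattern bijection $Y^c \mapsto T_{Y^c}$ of \S 4.1. First I would record the type and weight. Since the content of $Y$ is $\nu$, the $j$th row of $Y^c$ has length $\sum_i a_{i,j}(Y^c) = \sum_i a_{j,i}(Y) = \nu_j$, so $Y^c$ has shape $\nu$; and the number of $i$'s in $Y^c$ is $\sum_j a_{i,j}(Y^c) = \sum_j a_{j,i}(Y) = \lambda_i - \mu_i$, so $Y^c$ has content $\lambda - \mu$. Hence, once $Y^c$ is known to be semistandard, $T_{Y^c}$ is a GT pattern of type $\nu$ and weight $\lambda - \mu$, as required for membership in $GZ(\nu, \lambda - \mu, \mu)$.

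Next I would establish the two stated condition-correspondences, which also secure well-definedness. For the Yamanouchi/IC(2) equivalence, I would write out the non-skew semistandard condition \eqref{ss-aij} for $Y^c$ and substitute $a_{k,m}(Y^c) = a_{m,k}(Y)$; the resulting inequality $\sum_{k=1}^{\ell} a_{m+1,k}(Y) \leq \sum_{k=1}^{\ell-1} a_{m,k}(Y)$ is exactly the Yamanouchi condition \eqref{yam-aij} for $Y$ with $i=m$, $j=\ell$. In particular, because $Y$ is an LR tableau, $Y^c$ is semistandard, so $T_{Y^c}$ is a genuine GT pattern; then Remark \ref{000} converts semistandardness of $Y^c$ into IC(2) for $T_{Y^c}$, giving that Yamanouchi in $Y$ is equivalent to IC(2) in $T_{Y^c}$.

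The computational heart is the semistandard/exponent equivalence. Using \eqref{gt2ss} one has $t_h^{(i+1)} - 2 t_h^{(i)} + t_h^{(i-1)} = a_{i+1,h}(Y^c) - a_{i,h}(Y^c)$ and $t_j^{(i+1)} - t_j^{(i)} = a_{i+1,j}(Y^c)$, so after the substitution $a_{k,l}(Y^c) = a_{l,k}(Y)$ the exponent collapses to
\begin{equation*}
\varepsilon_j^{(i)}(T_{Y^c}) = \sum_{1 \leq h < j} \left( a_{h,i+1}(Y) - a_{h,i}(Y) \right) + a_{j,i+1}(Y).
\end{equation*}
I would then recognize that $\varepsilon_j^{(i)}(T_{Y^c}) \leq \mu_i - \mu_{i+1}$ is, after rearrangement, precisely the skew semistandard condition \eqref{skew-ss-aij} with $m = i$ and $\ell = j$. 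This yields that semistandardness of $Y$ is equivalent to the exponent condition in $T_{Y^c}$, and combined with the previous paragraph shows $\psi(Y) = T_{Y^c} \in GZ(\nu, \lambda - \mu, \mu)$.

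Finally I would assemble the bijection. The transpose map and the tableau--pattern map are each invertible, so it suffices to check that the inverse sends $GZ(\nu, \lambda - \mu, \mu)$ into $LR(\lambda/\mu, \nu)$: starting from such a $T$ of type $\nu$, formula \eqref{aij-tij} recovers a semistandard $Y^c$ of shape $\nu$, and transposing its associated matrix produces a filling $Y$ of the skew shape $\lambda/\mu$ (the row lengths $\lambda_j - \mu_j$ match the content of $Y^c$) with content $\nu$; the IC(2) and exponent conditions on $T$ translate, by the two equivalences above, into the Yamanouchi and semistandard conditions for $Y$, so $Y$ is an LR tableau. The main obstacle will be the bookkeeping in the exponent computation --- getting the double-difference telescoping and the empty-sum boundary case $j=1$ right --- together with verifying that the $\mu$-shift appearing in the skew semistandard condition \eqref{skew-ss-aij} matches exactly the bound $\mu_i - \mu_{i+1}$ in the exponent condition, which is what makes the two conditions correspond and pins the image precisely to $GZ(\nu, \lambda - \mu, \mu)$.
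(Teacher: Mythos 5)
Your proposal is correct and follows essentially the same route as the paper's proof: it factors $\psi$ through the transpose map $Y \mapsto Y^c$ and the classical tableau--pattern bijection, identifies type $\nu$ and weight $\lambda-\mu$, matches the Yamanouchi condition \eqref{yam-aij} with the non-skew semistandard condition for $Y^c$ (hence IC(2) for $T_{Y^c}$ via Remark \ref{000}), and carries out the same telescoping computation showing $\varepsilon_j^{(i)}(T_{Y^c}) = \sum_{1\leq h<j}\bigl(a_{h,i+1}(Y)-a_{h,i}(Y)\bigr)+a_{j,i+1}(Y)$, so that the exponent bound $\mu_i-\mu_{i+1}$ is exactly the skew semistandard condition \eqref{skew-ss-aij}. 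Your explicit check of the inverse direction is a minor expository addition; the paper leaves bijectivity implicit in the condition equivalences.
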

\begin{proof}
From \eqref{companion}, $Y$ is a tableau of shape $\lambda/\mu$
if and only if the content of $Y^c$ is equal to $\lambda - \mu$. The content of $Y$ is equal 
to the shape of $Y^c$. The type and weight of $T_{Y^{c}}$ are therefore $\nu$ and $\lambda - \mu$, respectively.

Recall the Yamanouchi condition in $Y$ \eqref{yam-aij}: for $0 \leq i < n$ and $1 \leq j < n$,
\begin{equation}\label{2-1}
\sum_{k=1}^{i} a_{j,k}(Y)   \geq \sum_{k=1}^{i+1} a_{j+1,k}(Y).
\end{equation}%
Since $a_{i,j}(Y)=a_{j,i}(Y^{c})$ for all $i$
and $j$, this inequality, in terms of the entries in $Y^{c}$, is saying that
the number of entries less than or equal to $i+1$ in the $(j+1)$th row is
not more than the number of entries less than or equal to $i$ in
the $j$th row. It is the semistandard condition for $Y^{c}$ and therefore the
interlacing condition for $T_{Y^{c}}$.

To show this, consider expressing the elements of the GT pattern 
$T_{Y^{c}} = (t_j^{(i)})$ in terms of  $a_{i,j}(Y^{c})$. From the standard 
bijection between semistandard tableaux and GT patterns, \eqref{gt2ssoriginal}, 
we have
\begin{equation*}
t_j^{(i)} =  \sum_{k=1}^{i} a_{k,j}(Y^{c})
\end{equation*}
where $a_{i,j}(Y^{c})$ is the number of $i$ entries in the $j$th row of $Y^{c}$.

Consider the interlacing condition IC(2): $t_j^{(i)} \geq t_{j+1}^{(i+1)}$ where $0 \leq i < n$ and $1 \leq j < n$. 
Writing this with the above relation gives
\begin{equation*}
\sum_{k=1}^{i} a_{k,j}(Y^{c}) \geq \sum_{k=1}^{i+1} a_{k,j+1}(Y^{c}) 
\ \ \Leftrightarrow \ \ \sum_{k=1}^{i} a_{j,k}(Y)  \geq  \sum_{k=1}^{i+1} a_{j+1, k}(Y)
\end{equation*}
which is exactly the expression for the Yamanouchi condition \eqref{2-1}. 
It can be similarly shown that, as mentioned in Remark \ref{000}, IC(1) is
equivalent to $a_{i,j}(Y) \geq 0$.

\smallskip

Using \eqref{skew-ss-aij}, the semistandard condition for $Y$ says we have, for all $1 \leq \ell \leq n$ and $1 \leq m < n$,
\begin{equation}
\left( \sum_{k=1}^{\ell} a_{k,m+1}(Y) - \sum_{k=1}^{\ell -1} a_{k,m}(Y) \right)
 \leq \left( \mu_m - \mu_{m+1} \right)
\end{equation}
or
\begin{equation*}
 \sum_{k=1}^{\ell-1} \left( a_{m+1,k}(Y^{c}) - a_{m,k}(Y^{c}) \right) + 
a_{m+1,\ell}(Y^{c}) \leq \left( \mu_m - \mu_{m+1} \right).
\end{equation*}

To finish our proof, it is enough to show that the left hand side of the above inequality 
is the exponent $\varepsilon^{(m)}_{\ell}(T_{Y^{c}})$. This can be easily seen, by using \eqref{gt2ss}, as
\begin{align*}
\varepsilon _{\ell}^{(m)}(T_{Y^{c}})  =& \sum_{1\leq h<\ell} \left( t_{h}^{(m+1)}-2t_{h}^{(m)}+t_{h}^{(m-1)} \right)
+ \left( t_{\ell}^{(m+1)}-t_{\ell}^{(m)} \right) \\
=& \sum_{1\leq h<\ell} \left( \sum_{k=h}^{m+1} a_{k,h}(Y^{c}) -2 \sum_{k=h}^{m} a_{k,h}(Y^{c})
+ \sum_{k=h}^{m-1} a_{k,h}(Y^{c}) \right) \\
 &+ \left( \sum_{k=\ell}^{m+1} a_{k,\ell}(Y^{c}) -\sum_{k=\ell}^{m} a_{k,\ell}(Y^{c}) \right) \\
=& \sum_{1\leq k< \ell} \left( a_{m+1,k}(Y^{c}) -  a_{m,k}(Y^{c}) \right) +  a_{m+1,\ell}(Y^{c}).
\end{align*}
\end{proof}

We now have an alternative proof of Corollary \ref{bij}.

\begin{corollary}
There is a bijection between $LR(\lambda /  \mu, \nu)$ 
and $\mathcal{H}^{\circ}(\mu, \nu, \lambda)$. 
\end{corollary}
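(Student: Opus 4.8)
The plan is to obtain the bijection between $LR(\lambda/\mu,\nu)$ and $\mathcal{H}^{\circ}(\mu,\nu,\lambda)$ by composing the two bijections already established in this paper, exactly as in Corollary \ref{bij} but routing through the \emph{other} GZ scheme. Theorem \ref{CTGZ} provides a bijection $\psi \colon LR(\lambda/\mu,\nu) \to GZ(\nu,\lambda-\mu,\mu)$, sending $Y \mapsto T_{Y^c}$. Meanwhile, Theorem \ref{HT12}(2) provides a bijection $\mathcal{H}^{\circ}(\mu,\nu,\lambda) \to GZ(\nu,\lambda-\mu,\mu)$ sending $H \mapsto T_2(H)$. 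Both maps land in the \emph{same} set of GZ schemes, so composing $\psi$ with the inverse of the second map yields a bijection $LR(\lambda/\mu,\nu) \to \mathcal{H}^{\circ}(\mu,\nu,\lambda)$.

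Concretely, I would argue as follows. Given $Y \in LR(\lambda/\mu,\nu)$, form its companion tableau $Y^c$ and the associated GT pattern $T_{Y^c}$, which by Theorem \ref{CTGZ} lies in $GZ(\nu,\lambda-\mu,\mu)$. By Theorem \ref{HT12}(2), an $h$-array $H \in \mathcal{H}(\mu,\nu,\lambda)$ is a hive precisely when its derived array $T_2(H)$ is a GZ scheme in $GZ(\nu,\lambda-\mu,\mu)$, and since $H$ is uniquely determined by $T_2(H)$ once the boundary is fixed (as noted at the start of \S 3.2, any single derived $t$-array determines $H$), the assignment $H \mapsto T_2(H)$ is a bijection onto $GZ(\nu,\lambda-\mu,\mu)$. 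I would therefore define the desired map by sending $Y$ to the unique hive $H$ with $T_2(H) = T_{Y^c}$. This is well defined because $T_{Y^c}$ is a genuine element of $GZ(\nu,\lambda-\mu,\mu)$, and it is a bijection as a composite of two bijections with matching codomain and domain.

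The only point requiring care is the verification that the two occurrences of $GZ(\nu,\lambda-\mu,\mu)$ genuinely coincide — that is, the type, weight, and exponent-bound parameters match. Theorem \ref{CTGZ} states that $T_{Y^c}$ has type $\nu$, weight $\lambda-\mu$, and satisfies $\varepsilon^{(i)}_j \le \mu_i - \mu_{i+1}$, which is exactly the defining data of $GZ(\nu,\lambda-\mu,\mu)$ in the notation of the Gelfand--Zelevinsky lemma (type $\nu$, weight $\lambda-\mu$, bound by the third argument $\mu$). Theorem \ref{HT12}(2) asserts that $T_2(H)$ belongs to the identical set. So no reindexing or dualization is needed here, in contrast to Corollary \ref{bij} where the first route passed through $GZ(\mu^*,\lambda^*-\nu^*,\nu^*)$ and required duals. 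I expect this matching of parameters to be the main (though mild) obstacle, since it is purely a bookkeeping check that the labels agree; once confirmed, the bijection follows formally. One could also remark that this realization makes the hive entries computable directly by adding $T_{Y^c}$ along the appropriate diagonals to the fixed boundary of $H$, paralleling the final construction in Example \ref{ex-LR-GZ}.
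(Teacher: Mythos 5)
Your proposal is correct and is essentially identical to the paper's own proof of this corollary: both compose the bijection $Y \mapsto T_{Y^c}$ of Theorem \ref{CTGZ} with the inverse of the bijection $H \mapsto T_2(H)$ of Theorem \ref{HT12}(2), matching through the common set $GZ(\nu,\lambda-\mu,\mu)$ so that $Y$ corresponds to $H$ exactly when $T_2(H)=T_{Y^c}$. Your added bookkeeping check that the type, weight, and exponent-bound parameters agree is implicit in the paper's statements of those two theorems.
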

\begin{proof}
We can map any $Y \in LR(\lambda /  \mu, \nu)$ to 
$T_{Y^{c}} \in GZ(\nu, \lambda - \mu, \mu)$ via the bijection in 
Theorem \ref{CTGZ}. From Theorem \ref{HT12} there is a bijection 
between $\mathcal{H}^{\circ}(\mu, \nu, \lambda)$ and $GZ(\nu, \lambda - \mu, \mu)$ 
through the derived t-array $T_2$ of a hive. The composition of 
the first bijection with the inverse of the second then gives a bijection 
which assigns $Y \in LR(\lambda /  \mu, \nu)$ to 
$H \in \mathcal{H}^{\circ}(\mu, \nu, \lambda)$ if and only if $T_2 (H) = T_{Y^{c}}$. 
\end{proof}

\medskip

\subsection*{Acknowledgement}
We thank Roger Howe and Victor Protsak 
for helpful conversations regarding several aspects of this work.

\end{document}